\documentclass[11pt,a4paper]{article}
\usepackage[margin=26mm]{geometry}
\usepackage[T1]{fontenc}
\usepackage{lmodern,amsmath,amssymb,amsthm,microtype}
\usepackage{booktabs,longtable,array}
\usepackage{xcolor}
\definecolor{linkblue}{RGB}{24,67,111}
\usepackage[colorlinks=true,linkcolor=linkblue,citecolor=linkblue,urlcolor=linkblue,bookmarksnumbered=true]{hyperref}
\hypersetup{pdftitle={Ordinary and measurable colorings of Euclidean three-space},pdfauthor={}}
\newtheorem{theorem}{Theorem}[section]
\newtheorem{lemma}[theorem]{Lemma}
\newtheorem{corollary}[theorem]{Corollary}

\theoremstyle{definition}
\newtheorem{definition}{Definition}[section]
\newtheorem*{standard}{Standard theorem}
\theoremstyle{remark}

\newcommand{\R}{\mathbb R}
\newcommand{\Q}{\mathbb Q}
\newcommand{\Z}{\mathbb Z}
\newcommand{\T}{\mathbb T}
\newcommand{\C}{\mathbb C}
\newcommand{\one}{\mathbf 1}
\newcommand{\ip}[2]{\langle #1,#2\rangle}
\newcommand{\norm}[1]{\lVert #1\rVert}
\newcommand{\vol}{\operatorname{vol}}
\newcommand{\Id}{\operatorname{Id}}
\newcommand{\supp}{\operatorname{supp}}
\title{Ordinary and measurable colorings\\of Euclidean three-space}
\author{Mikhail Koshelev, Andrey Kupavskii, Vsevolod Voronov}
\date{}
\begin{document}
\maketitle
\begin{abstract}
A proper coloring of Euclidean space is a coloring that assigns different colors to points
at distance one. Using the planar character-law rigidity theorem of OpenAI, we prove
that every finite proper coloring of $\R^3$ can be replaced by a proper Lebesgue-measurable
coloring with the same number of colors. As a corollary we obtain that the ordinary and
measurable chromatic numbers of $\R^3$ are equal, and both lie between $7$ and $15$.
\end{abstract}

\section{Introduction}

The unit-distance graph of $\R^d$ has vertex set $\R^d$, with two points adjacent if and only if the
Euclidean distance between them equals one. The chromatic number $\chi(\R^d)$ is the least number
of colors in a proper coloring of this graph. Numerous results have been obtained concerning
both bounds on $\chi(\R^n)$ and various modifications of this problem. The lower bound on this
value for $n$ tending to $\infty$ was subsequently improved in the works of Raiskii~\cite{Raiskii1970}, Larman
and Rogers~\cite{LarmanRogers1972}, Frankl and Wilson~\cite{FranklWilson1981}, Raigorodskii~\cite{Raigorodskii2000}, and Davies~\cite{Davies}. Other studies
have addressed bounds on the chromatic numbers of spaces of small dimension~\cite{BogolyubskyRaigorodskii2019, OpenAI, Davies},
colorings of spheres~\cite{Kupavskii2011spheres, Raigorodskii2012, Prosanov2018}, spaces with other norms~\cite{Kupavskiy2011norm}, real and rational spaces with
several forbidden distances~\cite{RaigorodskiiShitova2008}, and colorings with a forbidden monochromatic equilateral
triangle~\cite{ZvonarevRaigorodskiiSamirovKharlamova2014}.

The measurable chromatic number $\chi_m(\R^d)$ is defined in a similar way with an additional
requirement for every color class to be a measurable set. The inequality
\[
  \chi(\R^d) \le \chi_m(\R^d)
\]
is thus immediate, but the question of whether the inequality can be replaced by equality
turned out to be quite tricky.

This question was first addressed by Sz\'ekely in his work on measurable chromatic
numbers of geometric graphs (see~\cite{Szekely1984}). Equality can fail when the allowed unit displacements
are restricted. For the graph in $\R^3$ that retains only unit edges whose displacement has rational
coordinates, Payne~\cite[Theorem~1 and Proposition~2]{Payne2009} proves that the ordinary chromatic
number is $2$, whereas the measurable chromatic number is at least $6$.

In three dimensions, Nechushtan~\cite{Nechushtan2002} proved the unrestricted lower bound $6$, whereas de
Oliveira Filho and Vallentin~\cite{OFV} proved the measurable lower bound $7$. At the other end, the
geometric constructions of Coulson~\cite{Coulson} and Radoi\v{c}i\'c and T\'oth~\cite{RT} give $15$ colors.

The lower bounds reflect different kinds of obstruction. By the compactness theorem
of de Bruijn and Erd\H{o}s~\cite{dBE}, failure of ordinary $k$-colorability is witnessed by a finite graph.
Measurability permits analytic restrictions on the color classes, from Falconer's planar bound
$\chi_m(\R^2) \ge 5$~\cite{Falconer1981, BachocNebeOliveiraFilhoVallentin2009} to spectral bounds based on continuous positive kernels~\cite{BachocNebeOliveiraFilhoVallentin2009} and operators~\cite{BachocDeCorteOliveiraFilhoVallentin2014}.
The three-dimensional bound of~\cite{OFV} follows from an upper-density bound strictly below $1/6$
for measurable sets avoiding distance one.

The recent planar character-law rigidity theorem of OpenAI~\cite{OpenAI} supplies a way to extract
regularity from a coloring without assuming measurability. Its planar transfer argument passes
through invariant random colorings, continuous factors and measurable samples. Davies~\cite[Theorem~1.1]{Davies}
proves that ordinary and measurable chromatic numbers agree in every dimension
at least four. We adapt the planar machinery to dimension three, using a deformation of the
sphere to obtain continuity in the remaining coordinate.

\begin{theorem}\label{thm:main}
For every positive integer $k$, the existence of a proper $k$-coloring of $\R^3$ implies
the existence of a proper Lebesgue-measurable $k$-coloring. Consequently,
\[
  \chi(\R^3) = \chi_m(\R^3).
\]
\end{theorem}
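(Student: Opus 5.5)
Given a proper $k$-coloring $c\colon\R^3\to[k]$, I will first pass to a translation-invariant random coloring. The space $[k]^{\R^3}$ of all colorings, with the product topology, is compact. The set of proper colorings is closed in it, because each constraint $c(x)\ne c(y)$ with $|x-y|=1$ involves only two coordinates. Averaging the point masses at the translates $c(\cdot+t)$ over larger and larger balls $t\in B_R$, and taking a weak-$*$ limit along a subsequence (Markov--Kakutani / Følner averaging for the amenable group $\R^3$), gives a $\R^3$-invariant Borel probability measure $\mu$ supported on proper colorings. I would also average over $SO(3)$, which is compact, so I may assume $\mu$ is invariant under all rigid motions. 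A sample from $\mu$ is still an arbitrary proper coloring and need not be measurable. The gain is that $\mu$ is an invariant object whose finite-dimensional marginals $P(c(x_1)=a_1,\dots,c(x_n)=a_n)$ depend only on the congruence class of $(x_1,\dots,x_n)$.

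The second step is to extract regularity from these marginals. For each plane $P\subset\R^3$, restricting to $P$ gives an invariant random proper coloring of $\R^2$. The planar character-law rigidity theorem of OpenAI~\cite{OpenAI} then says that the law is governed by characters of a continuous factor. Concretely, the planar transfer yields a continuous equivariant factor map to a compact group rotation (a torus $\T^m$, say), together with a measurable rule assigning colors from the torus coordinate. This gives measurable proper colorings of each plane, with the parameters of the factor depending on the plane only through its translates.

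The third step, which I expect to be the main obstacle, is to glue these planar pieces continuously in the transverse coordinate. Here I would use the deformation of the unit sphere announced in the introduction. Unit vectors $u=(v,h)$ with $|v|^2+h^2=1$ are parametrized by the height $h$, and the horizontal displacement $v$ lies on a circle of radius $\sqrt{1-h^2}$. I would show that the character data along the horizontal planes $z=\text{const}$ vary continuously in $z$. For this I apply rigidity also to the scaled forbidden distance $\sqrt{1-h^2}$ and use that the correlation functions $h\mapsto P(c(0)=a,\,c(v,h)=b)$ are continuous: they are determined by finitely many characters whose frequencies are continuous in $h$, and small $h$ perturbs the sphere only slightly. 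This should give a continuous factor map $\Phi\colon\R^3\to K$ into a compact abelian group together with a Borel coloring rule $f\colon K\to[k]$. The factor must be such that, for every unit vector $u$, the law of $(\Phi(x),\Phi(x+u))$ is supported on pairs on which $f$ differs.

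Finally, I sample: choose $\theta$ from Haar measure on $K$ and set $c'(x)=f(\theta+\Phi(x))$. Each class $\{x: f(\theta+\Phi(x))=a\}$ is the preimage of a Borel set under a continuous map, so it is Lebesgue measurable. Properness fails for a given unit pair only on a $\theta$-null set. To get properness for every pair simultaneously, I would use that the bad set is determined by the countable dense set of rational-coordinate pairs, combined with closedness of the supports of $f$'s color classes. Failing that, I would replace $f$ by a modification on a Haar-null set whose color classes are open up to boundaries avoided by the continuous factor, and then pick one good $\theta$. This gives a proper measurable $k$-coloring, so $\chi_m(\R^3)\le\chi(\R^3)$. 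The reverse inequality is trivial, so $\chi(\R^3)=\chi_m(\R^3)$.
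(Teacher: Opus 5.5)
\textbf{There is a genuine gap in your second and third steps, which carry the whole argument.}

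Theorem~\ref{thm:rigidity} does not give a continuous equivariant factor onto a torus with a measurable color rule. It says only that a rotation-invariant law on the group $D$ of \emph{all} characters of the algebraic plane, giving the Euclidean-continuous characters $C$ mass zero, is Haar measure. Using it needs a Hilbert-space mechanism your proposal lacks:
\begin{itemize}
\item In $L^2(\mu)$, project each color indicator $f_i$ onto the observations that are mean-square continuous under horizontal translation. This projection is a conditional expectation, so the $p_i=P_hf_i$ are weights in $[0,1]$ summing to $1$. The discarded part $b_i$ has spectral measure with no mass on $C$.
\item Working plane by plane never sees the edges $(z,t)$ with $0<|t|<1$ joining different horizontal planes. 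The paper takes the joint spectral measure $\lambda_i$ on $D\times D_v$ and applies rigidity to every slice $S\mapsto\lambda_i(S\times B)$. This gives $\lambda_i=m_D\otimes\eta_i$, hence $\ip{b_i}{T_{(z,t)}b_i}=0$ for all algebraic $(z,t)$ with $z\ne0$. Consequently $\ip{p_i}{T_gp_i}=0$ at every nonvertical algebraic unit vector $g$.
\item Your third step asserts that $h\mapsto P(c(0)=a,\,c(v,h)=b)$ is continuous and governed by finitely many characters. Nothing supports this. Vertical regularity is exactly what is missing, even after the horizontal projection.
\end{itemize}
The sphere enters differently. For a nonvertical unit vector $(z_0,t_0)$, the vector $(z_s,t_0+s)$ with $z_s=z_0\sqrt{(1-(t_0+s)^2)/(1-t_0^2)}$ is again a unit vector with zero correlation. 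Horizontal continuity then makes $\ip{p_i}{T_{(z_0,t_0+s)}p_i}$ uniformly small for small algebraic $s$. That is what lets the vertical averaging of Lemma~\ref{lem:smoothing} produce jointly continuous weights $a_i$ that keep zero correlation at every unit vector.

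\textbf{Your last step also fails as written.} Sampling a continuous family of weights gives only that monochromatic unit pairs form a null set of position--direction pairs. Knowing each fixed pair is bad for only a null set of parameters does not give one parameter good for all uncountably many pairs. Neither a countable dense set of pairs nor closedness or openness of the color classes is available here. The paper closes this with Lemma~\ref{lem:completion}:
\begin{itemize}
\item The density-one points $C_i$ of the classes are genuinely independent, because the continuous overlap function $G_i$ vanishes on the whole sphere.
\item The null remainder is colored by de~Bruijn--Erd\H{o}s compactness with the colors on $C_*=\bigcup_iC_i$ fixed. Each finite set is colored through a small translate into $C_*$.
\item Lebesgue measurability survives because only a null set is changed.
\end{itemize}

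\textbf{Your first step also needs repair.} Lebesgue averaging over $t\in B_R$ is undefined for a non-measurable coloring; an invariant-mean or fixed-point argument for the discrete group is required. You also cannot average over $SO(3)$. Its Haar average needs $\rho\mapsto\rho_*\mu$ to be measurable, which fails since the action on colorings is not continuous in $\rho$, and as a discrete group $SO(3)$ is not amenable. The paper restricts to the countable set $A$ of algebraic points and uses F\o lner sets for the amenable group of algebraic translations and horizontal rotations. Horizontal rotation invariance is all the rigidity theorem needs.
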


As an immediate corollary we obtain the following new lower bound on $\chi(\R^3)$.

\begin{corollary}\label{cor:bounds}
\[
  7 \le \chi(\R^3) = \chi_m(\R^3) \le 15.
\]
\end{corollary}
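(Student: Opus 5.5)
The corollary follows by combining Theorem~\ref{thm:main} with the known bounds recalled in the introduction; no new geometric input is needed. The plan is to establish the two inequalities separately and then glue them together using the equality $\chi(\R^3)=\chi_m(\R^3)$.

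\emph{Upper bound.} The constructions of Coulson~\cite{Coulson} and of Radoi\v{c}i\'c and T\'oth~\cite{RT} give explicit proper $15$-colorings of $\R^3$. Each color class there is a finite union of (half-open) polytopal cells arranged periodically, so it is a Borel set and hence Lebesgue measurable. Therefore $\chi_m(\R^3)\le 15$. Even without checking measurability of these colorings, $\chi(\R^3)\le 15$ follows from their existence, and Theorem~\ref{thm:main} applied with $k=15$ then yields a proper measurable $15$-coloring. Either route gives $\chi_m(\R^3)\le 15$.

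\emph{Lower bound.} De Oliveira Filho and Vallentin~\cite{OFV} show that every measurable set in $\R^3$ avoiding distance one has upper density strictly below $1/6$. Suppose there were a proper measurable $6$-coloring. Its six color classes are measurable, each avoids distance one, and together they cover $\R^3$. Passing to densities along a sequence of growing cubes, the upper densities of the classes would have to sum to at least $1$. This contradicts each being $<1/6$, so $\chi_m(\R^3)\ge 7$.

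\emph{Combining.} Theorem~\ref{thm:main} gives $\chi(\R^3)=\chi_m(\R^3)$. We must first make sure $\chi(\R^3)$ is finite so that the theorem applies with some positive integer $k$; this holds because $\chi(\R^3)\le 15$. Putting the three facts together gives
\[
7\le\chi_m(\R^3)=\chi(\R^3)\le 15 .
\]
The only step requiring any care is the density-summation argument in the lower bound. One must check that covering by six sets forces at least one of them to have upper density at least $1/6$: along a fixed sequence of cubes the six densities sum to $1$, so one of them is at least $1/6$ infinitely often. Everything else is bookkeeping. The genuine difficulty lies entirely in Theorem~\ref{thm:main} itself, which we are allowed to assume here.
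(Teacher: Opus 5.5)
Your proposal is correct and follows the paper's proof. The de Oliveira Filho--Vallentin bound $m_1(\R^3)<1/6$, together with summing the color classes' centered-cube densities, gives $\chi_m(\R^3)\ge 7$; the constructions of Coulson and Radoi\v{c}i\'c--T\'oth give $15$; and Theorem~\ref{thm:main} transfers both bounds to $\chi(\R^3)$. Your fallback of applying Theorem~\ref{thm:main} with $k=15$ neatly avoids the paper's direct check that the Radoi\v{c}i\'c--T\'oth tiles give Borel classes. That is just as well, since a periodic arrangement in $\R^3$ uses countably, not finitely, many cells; the classes are still Borel, but your ``finite union'' phrasing is inaccurate.
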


Throughout the paper we work in ZFC and use completed Lebesgue measure. The final
compactness argument extends a coloring from a conull set while respecting all unit edges and
the colors already assigned. Its color classes change only within the null remainder, which
ensures Lebesgue measurability.

\paragraph{Sketch of the proof.}  The first and crucial stage replaces an arbitrary proper coloring by a measurable partition for which monochromatic unit pairs occur on a set of measure zero in position and direction. The second stage removes those exceptional pairs without adding colors, which results in the proper measurable coloring. 

For the first stage, the original spatial color indicators cannot be integrated: no measurability was assumed. Instead restrict the coloring to the countable dense set of algebraic points and consider the space of \emph{all} proper colorings of that set. Averaging translates and rotations of one coloring gives an invariant probability distribution on whole colorings. The random variable $f_i$ asks whether the origin has color $i$. Its translate asks the same question at another point. Properness becomes an identity saying that the expected product of these two indicators is zero at every algebraic unit displacement.

We first retain the observations that change continuously in expected squared error under small horizontal translations. The resulting functions $p_i$ are nonnegative weights summing to one. The frequency description of this operation lets rigidity identify what was discarded, and shows that its removal preserves zero correlations at nonvertical unit displacements. Positivity alone would not give this preservation; it is where rigidity is used.

To handle the height coordinate, start with a nonvertical unit vector and change its height slightly. Compensate by changing the horizontal length so the vector remains on the unit sphere. The compensated displacement still has zero correlation, and horizontal continuity controls the cost of undoing the compensation. This estimate permits averaging over small algebraic height shifts. The resulting weights $a_i$ are continuous in expected squared error under all translations, and their unit correlations remain zero. After choosing representatives measurable in position and in the random coloring, one probability sample yields the required spatial partition.

For the second stage, replace each spatial class by its density-one points: points at which the class occupies a proportion tending to one of every shrinking ball. These sets avoid unit distance and cover almost all of space. For any finite set of vertices, many sufficiently small common translations preserve the colors already prescribed at its density-one points and put all other vertices into the covered set. Translating back gives a compatible coloring of that finite set. Compactness then supplies a global coloring preserving all the prescribed colors. The translation may depend on the finite set; no single translation is required to work everywhere.

The invariant probability model and sampling argument follow the planar transfer method of~\cite[Proposition~4.2]{OpenAI}; the continuous-observation identification has its counterpart in~\cite[Lemma~4.1]{OpenAI}. Our external rigidity input is the statement of~\cite[Theorem~2.3]{OpenAI}, and the algebraic averaging construction follows~\cite[Section~2.2]{OpenAI}. The density and completion arguments are related to~\cite[Lemma~B.1 and proof of Lemma~B.2]{Davies}, with the density-point step credited there to~\cite[proof of Lemma~5.6]{DOV}. 

The main proof introduces each object when it becomes useful. Appendix~\ref{app:probability} gives the probability and function-space background and proves the representative lemma. Appendix~\ref{app:frequencies} proves the identification of continuous observations with continuous frequencies. Appendix~\ref{app:height-averaging} constructs the algebraic-height average used at the sphere step.

\section{The intermediate measurable partition}\label{sec:weak}

Write $B(0,R)$ for the Euclidean ball of radius $R$. For $d\geq2$, let $\sigma_d$ be normalized surface measure on the unit sphere $S^{d-1}$; for $d=1$, give each of the two unit directions $-1,1$ mass $1/2$. Thus choosing $u$ according to $\sigma_d$ means choosing a uniform unit direction. In three-space we abbreviate $\sigma_3$ to $\sigma$.

\begin{definition}
A measurable partition $B_1,\ldots,B_k$ of $\R^d$ is \emph{weakly proper} if, for every $i$ and $R>0$,
\begin{equation}\label{eq:weak}
 \int_{B(0,R)}\int_{S^{d-1}}
 \one_{B_i}(x)\one_{B_i}(x+u)\,d\sigma_d(u)\,dx=0.
\end{equation}
\end{definition}

The integrand is the indicator of a monochromatic unit pair of color $i$ with first endpoint $x$ and direction $u$. Because it is nonnegative, the condition says that such pairs occur on a null set for this position-and-direction measure. It concerns pairs at distance one, rather than almost every pair of arbitrary points in $\R^d\times\R^d$.

Changing a partition on a spatial null set $N$ does not affect this condition. For a fixed direction $u$, only positions in $N\cup(N-u)$ can change the integrand, and that set is null. Integrating over directions gives a product-null exception by Tonelli's theorem. Completed-measurable classes can consequently be replaced by Borel representatives for this calculation; this also makes the product measurability explicit. The needed product-integration statement is recalled in Appendix~\ref{app:representatives}.

\begin{theorem}[Transfer to weak properness]\label{thm:transfer}
Assume Theorem~\ref{thm:rigidity}. If $\R^3$ has a proper $k$-coloring, where $k$ is a positive integer, then it has a weakly proper measurable $k$-partition.
\end{theorem}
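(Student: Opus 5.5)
We follow the outline in the sketch. First build an invariant random coloring. Let $A\subset\R^3$ be the countable dense set of points with real-algebraic coordinates. It is invariant under translations by $A$ and under the group $G_A$ of rotations with algebraic entries. Let $\Omega\subset[k]^A$ be the compact space of proper $k$-colorings of $A$ (properness is a closed condition). Restricting a given proper coloring $c$ of $\R^3$ to $A$ and composing with motions $x\mapsto gx+a$ gives points of $\Omega$. We average these over F\o lner-type finite sets of algebraic translations and rotations and take a weak-$*$ limit point. This yields a probability measure $\mu$ on $\Omega$ invariant under translations by $A$ and (after averaging rotations) under a dense subgroup of $SO(3)$; compactness of $SO(3)$ lets us pass to full rotation invariance on the algebraic sphere points.

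Define $f_i(\omega)=\one[\omega(0)=i]$ and its translates $f_i\circ T_a$. Properness gives
\[
\int f_i\,(f_i\circ T_u)\,d\mu=0
\]
for every algebraic unit vector $u$, and also $\sum_i f_i=1$.

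Next, pass to the horizontal continuous part. In $L^2(\mu)$, consider the unitary representation of the algebraic horizontal translations $A\cap(\R^2\times\{0\})$. Let $P$ be the projection onto the closed subspace of vectors $h$ with $\|h\circ T_a-h\|_2\to0$ as $a\to0$ horizontally, that is, the part of the spectral measure living on "continuous frequencies". This identification is the content of the appendix lemma corresponding to~\cite[Lemma~4.1]{OpenAI}. Set $p_i=Pf_i$. Because $P$ commutes with the relevant positive, unital Markov structure, we get $p_i\ge0$ and $\sum p_i=1$. The key claim is
\[
\int p_i\,(p_i\circ T_u)\,d\mu=0
\]
for nonvertical algebraic unit $u$. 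The discarded part of $f_i$ has spectral measure on the discrete/singular frequencies. Here Theorem~\ref{thm:rigidity}, the character-law rigidity, says that a correlation vanishing along the whole planar circle of unit displacements at fixed height forces the mixed terms to vanish. The correlation of $f_i$ therefore splits as the $p_i$ correlation plus the discarded correlation, each zero. This is the step I expect to be the main obstacle: one must check that the hypotheses of the rigidity theorem hold for the circle $\{u:u_3=t\}$ of radius $\sqrt{1-t^2}$ and algebraic $t$, uniformly enough.

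The third step handles height. Fix a nonvertical unit $u$ and a small algebraic $s$. The vector $u_s$ is $u$ shifted by $s$ vertically with its horizontal part rescaled to stay on the sphere. It satisfies $\int p_i (p_i\circ T_{u_s})=0$, and $u_s-(u+se_3)$ is horizontal and $O(s)$. Horizontal continuity of $p_i$ thus bounds $|\int p_i(p_i\circ T_{u+se_3})|$ by $o(1)$ as $s\to0$. We define $a_i$ as the average of $p_i\circ T_{se_3}$ over a small algebraic height window, using the construction of Appendix~\ref{app:height-averaging}. Then $a_i$ is $L^2$-continuous under all translations and still satisfies $a_i\ge0$ and $\sum a_i=1$. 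Its unit correlations vanish for nonvertical $u$, after letting the window shrink along a subsequence with a limiting argument. Vertical $u$ form a $\sigma$-null set and do not matter.

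Finally, sample. Continuity lets us extend $x\mapsto a_i\circ T_x$ from $A$ to all of $\R^3$. By the representative lemma of Appendix~\ref{app:probability}, we choose a jointly measurable version $a_i(x,\omega)$. Tonelli then gives
\[
\int_\Omega\int_{B(0,R)}\int_{S^2}a_i(x,\omega)a_i(x+u,\omega)\,d\sigma\,dx\,d\mu=0.
\]
So for $\mu$-a.e.\ $\omega$, simultaneously for all $i$ and rational $R$, the inner integral vanishes. Fix such an $\omega$ and assign to each $x$ a color by a measurable rule. For instance, pick a random threshold $\theta\in[0,1]$ and set $x\in B_i$ iff $\sum_{j<i}a_j<\theta\le\sum_{j\le i}a_j$. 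This requires also integrating over $\theta$. The bound $\one_{B_i}(x)\one_{B_i}(x+u)\le$ (something controlled by $a_i(x)a_i(x+u)$) needs care. If it fails, we instead use that the $a_i$ at a.e.\ sample are indicators, derived from $\int a_i(1-a_i)$ being zero via the $f_i$ structure. Either way a sample yields a weakly proper measurable $k$-partition.
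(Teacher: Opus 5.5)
Your outline follows the paper's architecture: the algebraic probability model, the horizontal continuous projection, the sphere deformation, the algebraic height averaging, and jointly Borel representatives. But the crucial second step is not supplied, and what you attribute to Theorem~\ref{thm:rigidity} is neither what it says nor how it is used.

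The mixed terms need no rigidity. Since $p_i\in H_h$ and translations preserve $H_h^\perp$, both $\ip{p_i}{T_ub_i}$ and $\ip{b_i}{T_up_i}$ vanish by orthogonality alone. Expanding $0=\ip{f_i}{T_uf_i}$ then only gives $\ip{p_i}{T_up_i}=-\ip{b_i}{T_ub_i}$. Because $b_i=f_i-p_i$ is signed, nothing you state forces either side to vanish.

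The missing idea is to prove $\ip{b_i}{T_{(z,t)}b_i}=0$ for $z\ne0$ through the spectral measure of $b_i$:
\begin{itemize}
\item Since $b_i\perp H_h$, this measure gives no mass to the Euclidean-continuous characters $C$ (Lemma~\ref{lem:frequencies}).
\item Horizontal rotations are unitary, preserve $H_h$ and $H_h^\perp$, and fix $f_i$. Hence they fix $b_i$, and the measure is rotation-invariant.
\item Theorem~\ref{thm:rigidity} then makes it Haar in the horizontal variable, and \eqref{eq:haar} kills the correlation.
\end{itemize}
To include the vertical component $t$, you must use the joint spectral measure $\lambda_i$ on $D\times D_v$. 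Apply rigidity to every slice $S\mapsto\lambda_i(S\times B)$ with $B\subset D_v$ Borel, which gives $\lambda_i=m_D\otimes\eta_i$. Rigidity of the horizontal marginal alone would not give this. No uniformity over heights or over circles of radius $\sqrt{1-t^2}$ is involved.

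Two further corrections. First, you cannot build $\mu$ by F\o lner averaging over all algebraic rotations of $\R^3$. That countable group contains a free subgroup of rank two (the one behind the Banach--Tarski paradox), so it is not amenable and F\o lner sets do not exist. Compactness of $SO(3)$ does not help, since non-algebraic rotations do not act on colorings of $A$. Use only the horizontal rotations $K$: the group $A\rtimes K$ is amenable, and horizontal rotation invariance is all the rigidity theorem needs.

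Second, in the sampling step your threshold rule with any fixed $\theta\in(0,1]$ already works, for a reason you did not note. It gives $x$ color $i$ only when $a_i(x)>0$, so $\one_{B_i}(x)\one_{B_i}(x+u)\le\one_{\{a_i(x)a_i(x+u)>0\}}$. The latter set of pairs is null because the nonnegative product integrates to zero. The paper instead uses the least $i$ with $h_i(x)\ge1/k$, which gives the bound $k^2h_i(x)h_i(x+u)$. Your fallback claim that the sampled weights are indicators is unfounded: they come from conditional expectations and generally are not indicators.
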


We prove this theorem in the next sections. Lemma~\ref{lem:completion} then converts any weakly proper finite partition into a proper measurable coloring with the same number of colors, completing Theorem~\ref{thm:main}.

\section{A probability model for the coloring}\label{sec:model}

Assume that a proper $k$-coloring of $\R^3$ is given. We first
replace it by a probability distribution on colorings of a countable
set of spatial points. The original color classes need not be
measurable for this construction, which follows the planar
model in~\cite[Proposition~4.2]{OpenAI}.

Let $F=\overline{\Q}\cap\R$ be the field of real algebraic numbers,
and write
\[
 E=F+iF=\overline{\Q},\qquad A=E\times F\subset\C\times\R=\R^3.
\]
Thus $(z,t)\in A$ records a horizontal displacement $z$ and a
vertical displacement $t$. The field $F$ is countable and dense in
$\R$. It is also closed under taking positive square roots of
positive elements. Countability will make the space of colorings
manageable; closure under square roots will later keep a deformation
along the unit sphere inside $A$.

Restrict the given coloring to $A$, and let
\[
 \Omega=\{\omega:A\to\{1,\ldots,k\}:
       \omega(x)\ne\omega(y)\text{ whenever }|x-y|=1\}.
\]
A point $\omega$ of $\Omega$ is an entire proper coloring of $A$.
Neighborhoods specify its colors at finitely many points. Since $A$
is countable, a diagonal subsequence argument gives compactness:
from any sequence of colorings, successively fix the color at the
first point, the second point, and so on. The limit remains proper
because each edge constraint concerns only two coordinates. This
topology is metrizable, and $\Omega$ is nonempty by our initial
restriction. We use its Borel events, generated by the open sets,
to construct the probability measure.

Let
\[
 K=\{u\in E:|u|=1\},\qquad R_u(z,t)=(uz,t).
\]
The elements of $K$ are the algebraic rotations of the horizontal
plane. Together with translations by $A$, they form a countable
group $\Gamma$ of motions $x\mapsto a+R_ux$. A motion acts on
colorings by
\[
 (h\omega)(x)=\omega(h^{-1}x).
\]
This action permutes coordinates, is continuous on $\Omega$, and
preserves properness because the motions preserve distances.

The finite-averaging fact proved from standard results in
Appendix~\ref{app:finite-averaging} supplies finite nonempty sets
$\Lambda_n\subset\Gamma$ such that
\[
 \frac{|h\Lambda_n\mathbin{\triangle}\Lambda_n|}{|\Lambda_n|}
 \longrightarrow0\qquad(h\in\Gamma).
\]
Here $\triangle$ denotes symmetric difference. In words, each fixed
motion changes $h$ only a vanishing fraction of the averaging set.
These are called F\o lner sets. Fix one $\omega_0\in\Omega$ and
give equal probability to its finitely many moved copies:
\[
 \mu_n=\frac1{|\Lambda_n|}\sum_{g\in\Lambda_n}\delta_{g\omega_0}.
\]
The measure $\delta_\omega$ is concentrated at the single coloring
$\omega$. Repetitions among the moved colorings cause no problem:
$\mu_n$ is the law obtained by choosing an index $g$ uniformly.

Probabilities on a compact metric space have a weakly convergent
subsequence. Thus, after passing to a subsequence, $\mu_n$ converges
to a Borel probability $\mu$, meaning that the integrals of every
continuous test function converge. For such a function $\psi$ and
every fixed $h\in\Gamma$,
\[
 \left|\int_\Omega\psi(h\omega)\,d\mu_n(\omega)
       -\int_\Omega\psi(\omega)\,d\mu_n(\omega)\right|
 \le\norm{\psi}_\infty
       \frac{|h\Lambda_n\triangle\Lambda_n|}{|\Lambda_n|}.
\]
Passing to the limit proves invariance under $h$, since continuous
functions determine Borel probabilities on a compact metric space.
Consequently $\mu$ is invariant under all algebraic translations
and horizontal rotations. We complete this measure by including
all subsets of probability-zero Borel sets. The result is an
ordinary countably additive probability on entire colorings;
Appendix~\ref{app:probability} records the measure conventions.

\subsection{Color observations and their correlations}

We work with square-integrable complex functions of this random
coloring. They form the Hilbert space $L^2(\Omega,\mu)$, with
\[
 \ip{v}{w}=\int_\Omega\overline{v(\omega)}w(\omega)\,d\mu(\omega),
 \qquad \norm{v}_2^2=\ip{v}{v}.
\]
The inner product is conjugate-linear in its first argument.
Functions equal with probability one are identified. The squared
distance $\norm{v-w}_2^2$ is their expected squared difference;
for indicators, it is the probability that their answers disagree.

For $g\in A$, put $\tau_g\omega(x)=\omega(x-g)$ and
$T_gv=v\circ\tau_{-g}$. This sign convention means that the
translated observation checks a displaced spatial coordinate.
In particular, define $f_i$ by inspecting the origin; then
\[
 f_i(\omega)=\one_{\{\omega(0)=i\}},
 \qquad (T_gf_i)(\omega)=\one_{\{\omega(g)=i\}}.
\]
Measure invariance shows that each $T_g$ preserves norms and inner
products, and
$T_gT_h=T_{g+h}$, with inverse $T_{-g}$. Such operators are called
unitary. We have $0\le f_i\le1$ and $\sum_{i=1}^k f_i=1$.
Moreover, properness of every coloring in $\Omega$ gives
\begin{equation}\label{eq:original-correlation}
 \ip{f_i}{T_{(z,t)}f_i}=0
 \qquad\bigl((z,t)\in A,\ |z|^2+t^2=1\bigr).
\end{equation}
The scalar product is the probability that the two fixed points
$0$ and $(z,t)$ both have color $i$. The coloring varies randomly;
the displacement does not. These functions of colorings, rather
than spatial indicators of the original coloring, are now our
working objects.

\section{Keeping horizontal continuity}\label{sec:horizontal}

An observation $v\in L^2(\mu)$ is horizontally continuous in mean
square if
\[
 \norm{T_{(z,0)}v-v}_2\longrightarrow0
 \qquad\text{as }z\in E\text{ tends to }0\text{ in }\R^2.
\]
Let $H_h$ be the subspace of these observations. This is continuity
of translated observations in expected squared error; it imposes
no pointwise continuity on an individual coloring.

The next lemma identifies the orthogonal projection onto this
subspace as an averaging operation that preserves nonnegativity.
We give its form for a dense translation group, so that it also
applies later to the algebraic vertical shifts. Here the group is
$G=E$ and the operators are $T_{(z,0)}$.

\begin{lemma}[Continuous observations and retained information]
\label{lem:factor}
Let a countable dense additive subgroup $G\subset\R^r$ act by
measure-preserving transformations on a compact metric space with
a completed Borel probability $\mu$. For its translation operators
$T_g$, put
\[
 H_G=\{v\in L^2(\mu):\norm{T_gv-v}_2\to0
                 \text{ as }g\in G,\ g\to0\text{ in }\R^r\}.
\]
There is a completed sub-sigma-algebra $\mathcal F_G$ of measurable
events such that $H_G=L^2(\mathcal F_G)$. The orthogonal projection
onto $H_G$ is conditional expectation
$Pv=\mathbb E[v\mid\mathcal F_G]$. In particular, it preserves
nonnegative functions and the constant $1$.
\end{lemma}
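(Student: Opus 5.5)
The plan is to show that $H_G$ is a closed subspace containing the constants and stable under the lattice operations $v\mapsto \max(\Re v,0)$, and also stable under products of bounded elements. Then $\mathcal F_G$ will be the completed sigma-algebra of events whose indicators lie in $H_G$, and the standard characterization will give $H_G=L^2(\mathcal F_G)$.

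\emph{Closedness and linearity.} Each $T_g$ is unitary, so $\norm{T_gv-v}_2\le \norm{T_gw-w}_2+2\norm{v-w}_2$. Hence an $L^2$-limit of elements of $H_G$ lies in $H_G$ by a three-epsilon argument, and linearity is clear. Constants are fixed by every $T_g$. Since $T_g$ is composition with a measure-preserving map, $T_g$ commutes with pointwise operations: $T_g(\overline v)=\overline{T_gv}$ and $T_g(\max(v,0))=\max(T_gv,0)$. The inequality $|\max(a,0)-\max(b,0)|\le|a-b|$ then shows that $H_G$ is closed under conjugation, real parts, positive parts, and $\min(v,c)$ for constants $c$. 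For bounded $v,w\in H_G$ the product satisfies
\[
\norm{T_g(vw)-vw}_2\le \norm{v}_\infty\norm{T_gw-w}_2+\norm{w}_\infty\norm{T_gv-v}_2,
\]
so bounded elements of $H_G$ form an algebra.

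\emph{Sigma-algebra.} Define $\mathcal F_G=\{B \text{ measurable}:\one_B\in H_G\}$. This family contains the null sets and $\Omega$, is closed under complements ($1-\one_B$), and is closed under finite intersections (products). It is closed under countable increasing unions because $\one_{B_n}\to\one_B$ in $L^2$ and $H_G$ is closed. Thus $\mathcal F_G$ is a completed sigma-algebra. For the inclusion $L^2(\mathcal F_G)\subset H_G$, simple $\mathcal F_G$-functions lie in $H_G$ and are dense in $L^2(\mathcal F_G)$, and $H_G$ is closed.

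For the reverse inclusion, take $v\in H_G$; by conjugation and real parts we may assume $v$ is real. For each rational $c$, the functions $w_n=\min\bigl(n\max(v-c,0),1\bigr)$ lie in $H_G$ and increase pointwise to $\one_{\{v>c\}}$, so they converge in $L^2$ by dominated convergence. Hence $\{v>c\}\in\mathcal F_G$, so $v$ is $\mathcal F_G$-measurable, and $v\in L^2(\mathcal F_G)$.

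Finally, the orthogonal projection onto $L^2(\mathcal F_G)$ is the conditional expectation $\mathbb E[\,\cdot\mid\mathcal F_G]$, which is positivity preserving and fixes $1$. This is standard Hilbert-space probability, recorded in Appendix~\ref{app:probability}.

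The main obstacle is the lattice step, namely that $H_G$ is stable under truncations. It rests on $T_g$ being a Koopman operator, that is, composition with a measure-preserving point map. This holds here because the action is by genuine transformations of $\Omega$, so pointwise operations commute with $T_g$. Countability and density of $G$ play no role beyond making the limit $g\to0$ meaningful. Measurability issues with the completion are handled by working modulo null sets throughout.
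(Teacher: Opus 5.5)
Your proof is correct, and it follows a genuinely different route from the paper's, organized dually to it.

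The paper defines $\mathcal F_c$ as the completion of the $\sigma$-algebra generated by all bounded real representatives of elements of $H_c$. With that definition, $H_c\subseteq L^2(\mathcal F_c)$ is nearly immediate: one only truncates with the radial clipping $\theta_M$ and passes to the limit. The work goes into the reverse inclusion, which the paper gets from the functional monotone class theorem applied to the algebra of bounded continuous observations. That is where the product estimate is essential.

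You instead define $\mathcal F_G$ intrinsically as the events whose indicators lie in $H_G$. This makes $L^2(\mathcal F_G)\subseteq H_G$ trivial, by density of simple functions and closedness of $H_G$. The work moves to showing that each $v\in H_G$ is $\mathcal F_G$-measurable. You do this with the explicit lattice approximations $\min(n\max(v-c,0),1)\uparrow\one_{\{v>c\}}$, which stay in $H_G$ because $T_g$ commutes with pointwise $1$-Lipschitz operations.

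What your route buys:
\begin{itemize}
\item It avoids the monotone class theorem.
\item Your $\mathcal F_G$ is automatically complete, so no separate completion step is needed.
\item It needs only the lattice structure. Even finite intersections follow from $\one_{A\cap B}=\max(\one_A+\one_B-1,0)$, so the product estimate is dispensable.
\end{itemize}

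The paper's route uses the algebra structure instead. That fits the ``closed subalgebra of observations equals a factor'' viewpoint it borrows from the planar argument, and it describes $\mathcal F_c$ as the events determined by bounded continuous observations.

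The two $\sigma$-algebras coincide. A complete sub-$\sigma$-algebra $\mathcal G$ with $L^2(\mathcal G)=H_G$ is necessarily $\{B:\one_B\in H_G\}$. Both proofs rest on the same key input: $T_g$ is composition with a measure-preserving point map.
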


The collection $\mathcal F_G$ describes the information retained
by these observations. Conditional expectation averages a random
variable while retaining that information: for every retained
event $B$, the averages of $v$ and $Pv$ on $B$ agree. Orthogonal
projection gives the decomposition
\[
 v=Pv+(v-Pv),\qquad Pv\in H_G,\qquad v-Pv\perp H_G.
\]
Appendix~\ref{app:conditional-expectation} gives the standard
projection and conditional-expectation statements. The particular
identification in Lemma~\ref{lem:factor}, together with its frequency
description below, is proved once in
Appendix~\ref{app:continuous-proof}; compare~\cite[Lemma~4.1]{OpenAI}.

For the horizontal subspace write $P_h$ for this projection, and set
\[
 p_i=P_hf_i,\qquad b_i=f_i-p_i.
\]
Positivity and preservation of constants give
\[
 0\le p_i\le1,\qquad \sum_{i=1}^k p_i=P_h1=1
 \quad\text{almost surely}.
\]
Thus the $p_i$ are color weights, still functions of an entire
coloring. The discarded parts $b_i$ lie in $H_h^\perp$ and need
not be nonnegative.

Preserving nonnegative weights does not by itself preserve the
zeros in~\eqref{eq:original-correlation}. On a space with two equally
likely outcomes, the two complementary indicators both become
the constant $1/2$ when all information is forgotten. Their product
changes from zero to $1/4$. To justify our particular projection,
we must control the translated correlations of $b_i$.

\subsection{Characters and the rigidity assumption}

A character is a multiplicative phase attached to each displacement.
For the horizontal group it is a function $d:E\to\T$, where
$\T=\{w\in\C:|w|=1\}$, satisfying
\[
 d(z+w)=d(z)d(w).
\]
The familiar examples are the ordinary frequencies
$j(\xi)(z)=e^{i\xi\cdot z}$, with $\xi\in\R^2$ and $z$ viewed
as a real planar vector. Initially we allow \emph{all} characters,
without a Euclidean continuity requirement. Write $D$ for their
group under pointwise multiplication, with the topology of
pointwise convergence, and put
\[
 C=\{j(\xi):\xi\in\R^2\}\subset D.
\]
These are exactly the characters continuous for the Euclidean
topology on $E$. A point of $D$ is a function on spatial
displacements, not a point in the plane.

The homomorphism identities define a closed subset of the compact
countable product $\T^E$, so $D$ is compact and metrizable.
The set $C$ is Borel: its frequency parametrization is continuous,
and the images of closed frequency balls are compact and cover
$C$. Appendix~\ref{app:dual-background} supplies the details and
the classification of Euclidean-continuous characters.

The spatial rotation group $K$ acts on these functions by
\[
 (ud)(z)=d(uz)\qquad(u\in K).
\]
Thus $K$ rotates the argument, whereas $\T$ is the circle of phase
values. A law on $D$ is rotation-invariant if replacing $d$ by
$ud$ does not change its distribution for any $u\in K$.

There is a uniform probability appropriate to the compact group
$D$: its normalized Haar measure $m_D$. It is characterized by
invariance under multiplying $d$ by any fixed character $a$.
For a finite group this is the uniform distribution on its elements;
for the ordinary circle it is uniform angular measure.

\begin{theorem}[Planar character-law rigidity; external assumption]
\label{thm:rigidity}
Every rotation-invariant Borel probability measure $\nu$ on $D$
satisfying $\nu(C)=0$ equals $m_D$.
\end{theorem}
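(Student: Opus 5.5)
The plan is to test the measure $\nu$ by Fourier analysis on the compact abelian group $D$. Since $D$ is the dual of the discrete countable group $E$, Pontryagin duality identifies the continuous characters of $D$ with the evaluations $d\mapsto d(z)$, $z\in E$. A Borel probability on $D$ is therefore determined by its Fourier--Stieltjes coefficients
\[
 \hat\nu(z)=\int_D d(z)\,d\nu(d)\qquad(z\in E).
\]
The Haar measure $m_D$ is the unique probability with $\hat m_D(z)=0$ for all $z\ne0$. So it suffices to prove that $\hat\nu(z)=0$ for every nonzero $z\in E$.

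\emph{Step 1: rotation invariance.} Invariance of $\nu$ under $K$ gives $\hat\nu(uz)=\hat\nu(z)$ for all $u\in K$. The group $K$ acts transitively on each algebraic circle: if $z,w\in E$ are nonzero with $|z|=|w|$, then $u=w/z$ lies in $E$ and has modulus one. Hence $\hat\nu(z)=\psi(|z|)$ for a radial function $\psi$, and $\psi$ is positive definite on $E$.

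\emph{Step 2: averaging over rotations.} The group $K$ is countable and abelian, hence amenable; fix an invariant mean, or a F\o lner sequence $(\Lambda'_n)$ in $K$ as in Appendix~\ref{app:finite-averaging}. Then
\[
 \hat\nu(z)=\int_D \Bigl(\lim_n\frac1{|\Lambda'_n|}\sum_{u\in\Lambda'_n}d(uz)\Bigr)\,d\nu(d),
\]
after passing to a subsequence along which the inner averages converge in $L^2(\nu)$; a mean ergodic theorem for the unitary $K$-action on $L^2(\nu)$ justifies this. Since $\nu(C)=0$, it suffices to prove the following pointwise statement. For every character $d\notin C$ and every $z\ne0$, the rotation averages $\frac1{|\Lambda'_n|}\sum_{u\in\Lambda'_n}d(uz)$ tend to $0$.

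\emph{Step 3: discontinuous characters equidistribute on circles.} This is the main obstacle. For a fixed $d\notin C$, the function $u\mapsto d(uz)$ is not a character of $K$, because $K$ acts multiplicatively while $d$ is additive. So one cannot simply appeal to orthogonality. I would first apply a van der Corput / squaring step. The squared modulus of the average is controlled by averages of $d(uz-vz)=d\bigl((u-v)z\bigr)$ over pairs $(u,v)$, and the differences $(u-v)z$ range over chords of the circle. These chords include arbitrarily short algebraic vectors in all algebraic directions.

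Next I would use the structure of discontinuous characters. By the classification in Appendix~\ref{app:dual-background}, $d\notin C$ means that $d$ is not continuous at $0$ on $E$. Equivalently, there are arbitrarily small $w\in E$ with $d(w)$ bounded away from $1$. Additivity over the $\mathbb Q$-span, combined with the richness of short chords supplied by square-root closure of $F$, should upgrade this to the following: $d$ takes values spread over all of $\T$ on the chord set in a uniform way. Then the pair averages vanish.

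Concretely, the first thing I would try is a dichotomy. Either the rotation averages of $d(wz)$ over short chords $w$ vanish, or the limiting values define a Euclidean-continuous homomorphism near $0$. In the latter case one would aim to show, by a Cauchy-equation argument, that $d$ coincides with some $j(\xi)$, contradicting $d\notin C$. Making this dichotomy rigorous, particularly passing from ``discontinuous at some scale'' to ``equidistributed on every algebraic circle'', is where I expect the real difficulty of the theorem to lie.
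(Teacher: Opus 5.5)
The paper does not prove this statement. It imports it as an external assumption, \cite[Theorem~2.3]{OpenAI}, and the whole three-dimensional transfer is conditional on it. So there is no paper route to compare with, and your proposal has to stand on its own. It does not.

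Step~1 is routine. Step~3, which you rightly call the heart of the matter, is a list of intentions rather than an argument. Nothing shows that averages of a discontinuous character over rotated chords vanish. The proposed dichotomy also has no mechanism: limits of averages of $d$ are not multiplicative, so they do not produce a homomorphism that a Cauchy-equation argument could identify with some $j(\xi)$.

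More seriously, Step~2 reduces the theorem to a statement that is false for every sequence of finite sets $\Lambda'_n\subset K$, F\o lner or not, and every subsequence. Here is a counterexample.
\begin{itemize}
\item Fix a prime $p$ and an embedding $\overline{\Q}\to\overline{\Q}_p$ with valuation $v$. Put $\bar v(w)=v(\bar w)$ and $\mathcal O_v=\{w\in E:v(w)\ge0\}$, which is an additive subgroup.
\item For $u\in K$ one has $\bar u=u^{-1}$, hence $\bar v(u)=-v(u)$ and $K\subseteq\mathcal O_v\cup\mathcal O_{\bar v}$. So one of these two groups, call it $M$, contains at least half of $\Lambda'_n$ for infinitely many $n$.
\item Let $\rho$ be Haar probability on the annihilator $M^\perp\subset D$. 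Then $\int_D|\Lambda'_n|^{-1}\sum_{u\in\Lambda'_n}d(u)\,d\rho(d)=|\Lambda'_n\cap M|/|\Lambda'_n|\ge1/2$ infinitely often.
\item Reverse Fatou then gives a $\rho$-positive set of $d$ whose rotation averages at $z=1$ do not tend to $0$.
\item $M\supseteq\Z[i,1/q]$ for a prime $q\ne p$, so $M$ is dense in $\C$ and $M^\perp\cap C=\{1\}$.
\item $E/M$ is infinite, since the $p^{-k}$ are distinct modulo $M$, so $\rho$ has no atoms.
\end{itemize}
Thus $\rho(C)=0$, and there are characters $d\notin C$ violating your pointwise claim. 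There is no conflict with the theorem, because $\rho$ is not $K$-invariant. Your van der Corput step does not rescue the claim either: every $d\in M^\perp$ equals $1$ on the chords $u-u'$ with $u,u'\in M$, so the pair averages need not vanish.

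The only valid form of your reduction is the $\nu$-almost-everywhere one. It says that the projection of $d\mapsto d(z)$ onto the $K$-invariant functions in $L^2(\nu)$ vanishes. Proving that for every invariant $\nu$ with $\nu(C)=0$ is just a restatement of the theorem. A proof has to use the invariance of $\nu$ and the condition $\nu(C)=0$ jointly, at the level of measures, and your proposal contains no such argument.
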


This is the statement of~\cite[Theorem~2.3]{OpenAI} that we assume
throughout the three-dimensional transfer. We do not prove it here.
Its hypothesis says that a random character is Euclidean-continuous
with probability zero; the conclusion determines its entire law.

We will use the following direct consequence of Haar invariance:
\begin{equation}\label{eq:haar}
 \int_Dd(z)\,dm_D(d)=0\qquad(z\in E\setminus\{0\}).
\end{equation}
To prove it, fix $z\ne0$ and choose the ordinary character
$a=j(\pi z/|z|^2)$, which has $a(z)=-1$. Multiplication by $a$
preserves $m_D$, so the displayed integral equals its own negative.
At $z=0$ every character has value $1$, and the integral is $1$.

\subsection{A measure recording translated correlations}

For commuting unitary matrices, one can choose common eigenvectors.
On a common eigenvector, translation by $z$ multiplies by a phase
$d(z)$; the group law forces these phases to form a character.
The correlation of a vector with its translate is then a sum of
these phases weighted by the squared lengths of its components.
The spectral theorem replaces that sum by an integral in our
Hilbert space. It does not require an eigenvector basis in
infinite dimension.

For the horizontal translations and every $v\in L^2(\mu)$ there
is a unique finite positive Borel measure $\lambda_v^h$ on $D$
such that
\begin{equation}\label{eq:spectral-interface}
 \ip{v}{T_{(z,0)}v}=\int_Dd(z)\,d\lambda_v^h(d),
 \qquad \lambda_v^h(D)=\norm{v}_2^2.
\end{equation}
More precisely, there are orthogonal projections $\Pi(S)$ for
Borel sets $S\subset D$ that extract the corresponding frequency
part of a vector, and
$\lambda_v^h(S)=\norm{\Pi(S)v}_2^2$. This is the projection-valued
form of the spectral theorem; its precise statement, for any
countable abelian translation group, is given in
Appendix~\ref{app:frequencies}; see~\cite[Theorem~D.3.1]{BHV}.
The measure $\lambda_v^h$ lives on characters and records the
correlations of one observation. The probability $\mu$ lives on
colorings. In particular, $\lambda_v^h$ need not have mass one.

We can now state the frequency description of the \emph{same}
continuous-observation subspace used in Lemma~\ref{lem:factor}.

\begin{lemma}[Frequency description of horizontal continuity]
\label{lem:frequencies}
For the horizontal action, $H_h$ is the range of $\Pi(C)$, and
$P_h=\Pi(C)$. Equivalently, $v\in H_h$ exactly when
$\lambda_v^h(D\setminus C)=0$. Consequently
\[
 b\perp H_h\quad\Longrightarrow\quad
 \lambda_b^h(C)=\norm{P_hb}_2^2=0.
\]
\end{lemma}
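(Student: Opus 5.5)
We prove the two inclusions $\operatorname{ran}\Pi(C)\subset H_h$ and $H_h\subset\operatorname{ran}\Pi(C)$ directly from the spectral representation~\eqref{eq:spectral-interface}. The starting point is the identity
\[
 \norm{T_{(z,0)}v-v}_2^2=2\norm{v}_2^2-2\operatorname{Re}\ip{v}{T_{(z,0)}v}
 =\int_D|d(z)-1|^2\,d\lambda_v^h(d),
\]
valid for every $v\in L^2(\mu)$ and $z\in E$. It uses $|d(z)|=1$ and $\lambda_v^h(D)=\norm{v}_2^2$. Thus horizontal continuity of $v$ means exactly that $\int_D|d(z)-1|^2\,d\lambda_v^h\to0$ as $z\in E$ tends to $0$.

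\emph{Inclusion $\operatorname{ran}\Pi(C)\subset H_h$.} If $\lambda_v^h(D\setminus C)=0$, the integral runs over $C$ only. For $d=j(\xi)$ the integrand $|e^{i\xi\cdot z}-1|^2$ is bounded by $4$ and tends to $0$ pointwise in $\xi$ as $z\to0$. Since $\lambda_v^h$ is finite, dominated convergence gives $v\in H_h$. Pulling back along the continuous parametrization $\xi\mapsto j(\xi)$ is harmless here, because we only evaluate the integrand on $C$.

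\emph{Inclusion $H_h\subset\operatorname{ran}\Pi(C)$.} This is the main step. Suppose $v\in H_h$ and put $S=D\setminus C$ and $w=\Pi(S)v$. Since $\Pi(S)$ commutes with the translations, $w\in H_h$ as well, and $\lambda_w^h$ is the restriction of $\lambda_v^h$ to $S$. We must show $\lambda_w^h(S)=0$.

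Choose a sequence $z_n\in E\setminus\{0\}$ with $z_n\to0$ rapidly, so that $\sum_n\int_D|d(z_n)-1|^2\,d\lambda_w^h<\infty$. By Tonelli, for $\lambda_w^h$-almost every $d$ we get $\sum_n|d(z_n)-1|^2<\infty$, hence $d(z_n)\to1$. A single sequence is not enough to force continuity of $d$. We therefore take a countable family of sequences: for every sequence $(z_n)$ from a suitable countable collection, $\lambda_w^h$-almost every $d$ satisfies $d(z_n)\to1$. We then show that a character with $d(z_n)\to1$ along all these sequences is Euclidean-continuous at $0$, hence continuous everywhere by the homomorphism property, hence lies in $C$ by the classification in Appendix~\ref{app:dual-background}.

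The delicate point is choosing the countable family. Continuity of a character $d$ at $0$ is equivalent to the following: for every $\epsilon>0$ there is $\delta>0$ such that $|d(z)-1|<\epsilon$ for all $z\in E$ with $|z|<\delta$. The set of such $d$ is a countable intersection of unions of closed conditions in $D$, so it is Borel. To avoid a sequence-by-sequence argument, I would argue measure-theoretically instead. For each rational $\delta>0$, the average of $\int|d(z)-1|^2\,d\lambda_w^h$ over $z$ in the ball of radius $\delta$ is small. To make sense of this average, pass to the Euclidean closure via the unitary representation being weakly continuous on $E$: the map $z\mapsto T_{(z,0)}w$ is uniformly continuous on $E$, so it extends to a continuous unitary representation of $\R^2$ on the cyclic subspace of $w$. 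By Stone's theorem (the SNAG theorem), the spectral measure of $w$ for this $\R^2$-action lives on $\widehat{\R^2}=\R^2$. Restricting characters of $\R^2$ to $E$ and using uniqueness of the spectral measure in~\eqref{eq:spectral-interface}, we get $\lambda_w^h=j_*\rho$ for a finite measure $\rho$ on $\R^2$. Hence $\lambda_w^h$ is concentrated on $C$. Since it also lives on $S=D\setminus C$, it is zero, so $w=0$ and $v=\Pi(C)v$.

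I expect this identification of measures to be the main technical obstacle. The point to check is that $j_*\rho$ and $\lambda_w^h$ have the same Fourier coefficients $d\mapsto d(z)$ for $z\in E$. These functions span a dense subalgebra of $C(D)$ by Stone--Weierstrass, so the two finite measures coincide.

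\emph{Conclusion.} The two inclusions give $H_h=\operatorname{ran}\Pi(C)$, and since $\Pi(C)$ is an orthogonal projection, $P_h=\Pi(C)$. The equivalence $v\in H_h\iff\lambda_v^h(D\setminus C)=0$ follows because $\lambda_v^h(D\setminus C)=\norm{v-\Pi(C)v}_2^2$. Finally, for $b\perp H_h$ we have $P_hb=0$, and
\[
 \lambda_b^h(C)=\norm{\Pi(C)b}_2^2=\norm{P_hb}_2^2=0.
\]
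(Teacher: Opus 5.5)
Your proof is correct and follows the paper's route. Dominated convergence gives $\operatorname{ran}\Pi(C)\subset H_h$. For the converse you extend the orbit to real horizontal parameters and obtain a frequency measure on $\R^2$, push it forward by $j$, and identify it with $\lambda^h$ by Stone--Weierstrass uniqueness; you get that measure from the SNAG theorem on the cyclic subspace, where the paper applies Bochner's theorem to $x\mapsto\ip{v}{T_xv}$.

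The detour through $w=\Pi(D\setminus C)v$ and the abandoned sequence discussion are unnecessary. The same argument applied directly to $v$ already shows that $\lambda_v^h$ is carried by $C$.
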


Its proof is the frequency part of the combined proof in
Appendix~\ref{app:continuous-proof}. The event description supplies
positivity of $P_h$; the frequency description now permits us to
use rigidity on what $P_h$ discards.

\subsection{Keeping the vertical frequency in the argument}

Fix a color $i$. Recall that $b_i=f_i-p_i$ is a function of the
random coloring. We need its correlation with translations that
have both horizontal and vertical components. Let $D_v$ be the
compact group of all characters $e:F\to\T$, with no continuity
assumption. Every character of $A=E\times F$ has the form
$(z,t)\mapsto d(z)e(t)$. The spectral theorem for this joint
translation action gives a finite positive measure $\lambda_i$
on $D\times D_v$ satisfying
\begin{equation}\label{eq:joint-correlation}
 \ip{b_i}{T_{(z,t)}b_i}
 =\int_{D\times D_v}d(z)e(t)\,d\lambda_i(d,e),
 \qquad \lambda_i(D\times D_v)=\norm{b_i}_2^2.
\end{equation}
Taking $t=0$ shows that its horizontal marginal
$S\mapsto\lambda_i(S\times D_v)$ has the coefficients of
$\lambda_{b_i}^h$. Uniqueness and Lemma~\ref{lem:frequencies}
therefore give
\begin{equation}\label{eq:no-continuous-horizontal-mass}
 \lambda_i(C\times D_v)=0.
\end{equation}

We also need rotational invariance of this joint measure. Every
translation preserves $H_h$, because it commutes with the
horizontal translations. Rotations preserve $H_h$ because they
send small horizontal displacements to small horizontal
displacements. The inverse operators have the same property.
They consequently preserve $H_h^\perp$ as well and commute with
$P_h$; Appendix~\ref{app:commutation} states this elementary
projection fact. A rotation fixes the observation $f_i$ at the
origin, so it fixes both $p_i$ and $b_i$.

In operator notation, set $U_uv(\omega)=v(R_{u^{-1}}\omega)$,
where $R_{u^{-1}}$ acts on colorings as above. Then
\[
 U_uT_{(z,t)}U_u^{-1}=T_{(uz,t)},\qquad U_ub_i=b_i.
\]
Unitarity implies that the correlation in~\eqref{eq:joint-correlation}
is unchanged when $(z,t)$ is replaced by $(uz,t)$. Uniqueness of
the spectral measure says that $\lambda_i$ is unchanged under
$(d,e)\mapsto(ud,e)$: its distribution after this map has the same
Fourier coefficients. The vertical character is left unchanged.

Now fix \emph{any} Borel set $B\subset D_v$ and define a measure
on the horizontal characters by
\[
 \nu_B(S)=\lambda_i(S\times B)\qquad(S\subset D\text{ Borel}).
\]
It is finite, positive, rotation-invariant, and gives $C$ mass
zero by~\eqref{eq:no-continuous-horizontal-mass}. Let
$c_B=\nu_B(D)=\lambda_i(D\times B)$. If $c_B>0$, apply the
rigidity assumption to the probability $\nu_B/c_B$. If $c_B=0$,
positivity already gives $\nu_B=0$. Both cases yield
\begin{equation}\label{eq:rectangle-factorization}
 \lambda_i(S\times B)=m_D(S)\lambda_i(D\times B)
 \qquad(S\subset D,\ B\subset D_v\text{ Borel}).
\end{equation}
Put $\eta_i(B)=\lambda_i(D\times B)$. Finite measures agreeing
on all Borel rectangles agree on the sigma-algebra generated by
those rectangles. Here the factors are compact metric spaces,
so this is the Borel sigma-algebra of their product. Hence
\[
 \lambda_i=m_D\otimes\eta_i.
\]
Applying rigidity only to the horizontal marginal would not
justify this conclusion: two copies of the same uniform random
variable have uniform marginals but are dependent. The argument
for every vertical set $B$ is what proves the product identity.

Substituting it into~\eqref{eq:joint-correlation} and using
Haar cancellation gives
\[
 \ip{b_i}{T_{(z,t)}b_i}
 =\left(\int_Dd(z)\,dm_D(d)\right)
  \left(\int_{D_v}e(t)\,d\eta_i(e)\right)=0
 \qquad(z\ne0,\ t\in F).
\]
The measure $\eta_i$ has mass $\norm{b_i}_2^2$; no continuity
or other restriction on its vertical frequencies was needed.

Finally, translations preserve both horizontal subspaces, as
observed above. Therefore both translated mixed terms vanish:
\[
 \ip{p_i}{T_gb_i}=0,\qquad \ip{b_i}{T_gp_i}=0.
\]
For a nonvertical algebraic unit vector $g=(z,t)$, expand
$f_i=p_i+b_i$ to obtain
\begin{align*}
 0=\ip{f_i}{T_gf_i}
 &=\ip{p_i}{T_gp_i}+\ip{p_i}{T_gb_i}
   +\ip{b_i}{T_gp_i}+\ip{b_i}{T_gb_i}\\
 &=\ip{p_i}{T_gp_i}.
\end{align*}
We have proved
\begin{equation}\label{eq:horizontal-correlation}
 \ip{p_i}{T_{(z,t)}p_i}=0
 \qquad\bigl((z,t)\in A,\ |z|^2+t^2=1,\ z\ne0\bigr).
\end{equation}
Thus the horizontally continuous color weights retain every
required nonvertical zero correlation. At $z=0$ the Haar integral
is $1$, so the vertical unit displacements remain to be treated.

\section{Using the geometry of the sphere}\label{sec:sphere}

The weights $p_i=P_hf_i$ are continuous under small horizontal
translations in the $L^2$ norm. Vertical continuity is still missing.
To obtain the estimate needed for vertical averaging, we use the
geometry of the unit sphere: a small change in height can be
compensated by a small change in horizontal length.

Fix an algebraic unit vector $u_0=(z_0,t_0)\in A$ with $z_0\ne0$.
For all sufficiently small $s\in F$, put
\[
 z_s=z_0\sqrt{\frac{1-(t_0+s)^2}{1-t_0^2}}.
\]
The denominator is positive because $|z_0|^2=1-t_0^2>0$; the
numerator remains positive for small $s$. The square root is algebraic,
so $z_s\in E\setminus\{0\}$. Moreover, $z_s\to z_0$ and
\[
 |z_s|^2+(t_0+s)^2
 =|z_0|^2\frac{1-(t_0+s)^2}{1-t_0^2}+(t_0+s)^2=1.
\]
Thus $(z_s,t_0+s)$ is again a nonvertical algebraic unit vector.
Its correlation is zero by \eqref{eq:horizontal-correlation}.

Compare it with $(z_0,t_0+s)$, which has the same height. Factoring
the difference of their translation operators gives
\[
 T_{(z_0,t_0+s)}-T_{(z_s,t_0+s)}
 =T_{(z_s,t_0+s)}\bigl(T_{(z_0-z_s,0)}-\Id\bigr).
\]
Subtract the zero correlation at $(z_s,t_0+s)$ and apply
Cauchy--Schwarz and preservation of the norm:
\begin{equation}\label{eq:local}
 \left|\ip{p_i}{T_{(z_0,t_0+s)}p_i}\right|
 \leq\norm{p_i}_2
       \norm{\bigl(T_{(z_0-z_s,0)}-\Id\bigr)p_i}_2
 \longrightarrow0\qquad(s\in F,\ s\to0).
\end{equation}
Only horizontal continuity has been used. In particular, for every
error tolerance there is a neighborhood of zero in which this bound
holds for \emph{every} algebraic $s$. Equivalently,
\[
 \sup_{s\in F,\ |s|\leq\delta}
 \left|\ip{p_i}{T_{(z_0,t_0+s)}p_i}\right|
 \longrightarrow0\qquad(\delta\downarrow0).
\]
This controls the full set of shifts in a small averaging kernel;
it does not assert that $p_i$ is vertically continuous.
We now average the vertical translations $T_{(0,t)}$ over small algebraic
heights $t$. The output must still be a function of the random coloring,
so this is an average of vectors in $L^2(\mu)$. At this stage the orbit is
defined only for $t\in F$, and we have not proved its continuity. The
following lemma supplies both the averaging operation and its norm limit.

\begin{lemma}[Averaging algebraic heights]\label{lem:smoothing}
Let $(V_t)_{t\in F}$ be linear operators on a complex Hilbert space $H$
which preserve inner products and satisfy $V_0=\Id$ and
$V_sV_t=V_{s+t}$. No continuity in $t$ is assumed. Let $Q$ be the
orthogonal projection onto the closed subspace
\[
 H_v=\left\{v\in H:\norm{V_tv-v}\longrightarrow0
       \text{ as }t\in F,\ t\longrightarrow0\text{ in }\R\right\}.
\]
There is a complex-linear functional $I_{\C}$ on all bounded functions
$h:F\to\C$ which vanish outside some bounded real interval. It is
positive, meaning $I_{\C}(h)\geq0$ if $h\geq0$, and satisfies
\[
 I_{\C}(h(\,\cdot+s))=I_{\C}(h)\quad(s\in F),
 \qquad
 I_{\C}(\varphi|_F)=\int_{\R}\varphi(t)\,dt
       \quad(\varphi\in C_c(\R)).
\]
Here $C_c(\R)$ denotes the continuous functions which vanish outside a
bounded interval. For every bounded map $W:F\to H$ vanishing outside
such an interval, there is a unique vector $I_H(W)$ such that
\[
 \ip{w}{I_H(W)}
   =I_{\C}\bigl(t\mapsto\ip{w}{W(t)}\bigr)
       \quad(w\in H),
 \qquad
 \norm{I_H(W)}\leq I_{\C}\bigl(t\mapsto\norm{W(t)}\bigr).
\]
Inner products are linear in their second argument. The vector map is
linear and translation-invariant in $t$, and every bounded linear
operator $L:H\to H$ can be moved through it:
\[
 L I_H(W)=I_H(t\mapsto L W(t)).
\]
For every bounded $h:F\to\C$ and every nonnegative
$\varphi\in C_c(\R)$ of integral one,
\begin{equation}\label{eq:height-support-bound}
 \left|I_{\C}(\varphi h)\right|
 \leq\sup_{t\in F\cap\supp\varphi}|h(t)|.
\end{equation}
The product $\varphi h$ belongs to the domain of $I_{\C}$ even when
$h$ itself does not.

For $\varphi\in C_c(\R)$ define
\[
 S_\varphi v=I_H(t\mapsto\varphi(t)V_tv).
\]
This operator has range in $H_v$, vanishes on $H_v^\perp$, and satisfies
$\norm{S_\varphi}\leq\int_{\R}|\varphi|$. If
$\varphi_\varepsilon\in C_c(\R)$, $\varphi_\varepsilon\geq0$,
$\int_{\R}\varphi_\varepsilon=1$, and
$\supp\varphi_\varepsilon\subset[-\delta_\varepsilon,
\delta_\varepsilon]$ with $\delta_\varepsilon\to0$, then
\begin{equation}\label{eq:smoothing}
 \norm{S_{\varphi_\varepsilon}v-Qv}\longrightarrow0
 \qquad(v\in H).
\end{equation}
Finally, if $H=L^2(\Omega,\mu)$ for a completed Borel probability on a
compact metric space and the $V_t$ come from measure-preserving
transformations, then $Q$ preserves nonnegative functions and $Q1=1$.
\end{lemma}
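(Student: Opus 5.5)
The functional $I_{\C}$ will be an invariant mean, and the vector integral will be defined by Riesz representation.

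\emph{Construction of $I_{\C}$.} Let $\mathcal B$ be the space of bounded $h:F\to\C$ vanishing outside a bounded interval, and $\mathcal B_0\subset\mathcal B$ the restrictions $\varphi|_F$ with $\varphi\in C_c(\R)$; this restriction is injective because $F$ is dense. The functional $\varphi|_F\mapsto\int\varphi$ is positive and $F$-invariant on $\mathcal B_0$. I will extend it to $\mathcal B$ by Hahn--Banach, dominated by the sublinear functional
\[
p(h)=\inf\Bigl\{\int\psi:\psi\in C_c(\R),\ \psi\ge0,\ \psi|_F\ge\operatorname{Re}h\Bigr\}.
\]
This $p$ is finite because every $h\in\mathcal B$ is bounded with bounded support. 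To get $F$-invariance at the same time, I replace $p$ by the averaged version
\[
p'(h)=\inf\, p\Bigl(\tfrac1n\sum_{j=1}^n h(\cdot+s_j)\Bigr),
\]
with the infimum over finite families $s_1,\dots,s_n\in F$. This is the standard Day/Markov--Kakutani argument for the abelian group $F$. Domination $\operatorname{Re}I(h)\le p(h)$ gives positivity. Taking $\psi=\varphi'+\epsilon$-bumps gives the support bound~\eqref{eq:height-support-bound}: if $\varphi\ge0$ with $\int\varphi=1$, then $|\varphi h|\le M\varphi$ on $F$ with $M=\sup_{F\cap\supp\varphi}|h|$, and positivity yields $|I_{\C}(\varphi h)|\le M$ after a rotation of phase. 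Agreement on $\mathcal B_0$ is by construction. Finally I complexify by $I_{\C}(h)=I(\operatorname{Re}h)+iI(\operatorname{Im}h)$.

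\emph{Vector integral.} For bounded $W$, the map $w\mapsto\overline{I_{\C}(\ip{w}{W(\cdot)})}$ is conjugate-linear and bounded by $I_{\C}(\norm{W(\cdot)})\,\norm{w}$, by positivity and Cauchy--Schwarz. Riesz then gives $I_H(W)$ with the norm bound. Linearity, translation invariance and $LI_H(W)=I_H(LW)$ follow by pairing with $w$ and using $L^*$.

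\emph{Operator $S_\varphi$.} The bound $\norm{S_\varphi}\le\int|\varphi|$ is immediate. For the range, translation invariance of $I_{\C}$ gives
\[
V_sS_\varphi v=S_{\varphi(\cdot-s)}v,
\qquad\text{so}\qquad
\norm{V_sS_\varphi v-S_\varphi v}\le\norm{\varphi(\cdot-s)-\varphi}_{L^1}\norm{v}\to0,
\]
by uniform continuity of $\varphi$; hence the range lies in $H_v$. The subspace $H_v$ is closed and $V$-invariant, and $V_t^*=V_{-t}$, so $H_v^\perp$ is invariant as well. If $v\perp H_v$, then $\ip{w}{S_\varphi v}=I_{\C}(\varphi\ip{w}{V_tv})=0$ for $w\in H_v$. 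On the other hand $S_\varphi v$ lies in $H_v^\perp$, since $L=$ projection onto $H_v^\perp$ commutes with it. Together these force $S_\varphi v=0$.

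\emph{Convergence \eqref{eq:smoothing}.} For $v\in H_v$,
\[
S_{\varphi_\varepsilon}v-v=I_H\bigl(\varphi_\varepsilon(t)(V_tv-v)\bigr),
\]
using $I_{\C}(\varphi_\varepsilon)=1$. Its norm is at most $\sup_{|t|\le\delta_\varepsilon}\norm{V_tv-v}\to0$ by~\eqref{eq:height-support-bound}. Combined with vanishing on $H_v^\perp$, this gives the claim.

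\emph{Positivity in $L^2(\mu)$.} For $v\ge0$ and $w\ge0$, positivity of $I_{\C}$ gives $\ip{w}{S_\varphi v}\ge0$, so $S_\varphi v\ge0$. Also $S_\varphi1=1$, and positivity and the value at $1$ pass to the $L^2$-limit $Q$.

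I expect the main obstacle to be the construction of an $F$-invariant positive extension of Lebesgue integration to all bounded compactly supported functions on $F$. That step needs the amenable-averaging form of Hahn--Banach, together with a check that $p'$ still agrees with $\int$ on $\mathcal B_0$. That check holds because every translate of $\varphi|_F$ has the same integral.
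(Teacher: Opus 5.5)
Your proof is correct, but it reaches the lemma by a different route from the paper at two points.

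\textbf{The scalar functional.} The paper fixes an invariant mean $m$ on $F/\Z$, periodizes $h$ by integer translates, and sets $I_{\C}(h)=m(\mathcal P h)$. It then has to prove agreement with $\int_{\R}\varphi$ separately. It does this by killing each $e^{2\pi i n t}$ with the rational shift $1/(2n)\in F$ and using uniform trigonometric approximation. You instead take an $F$-invariant Hahn--Banach extension of $\varphi|_F\mapsto\int\varphi$, dominated by the translation-invariant sublinear functional $p$ and its averaged version $p'$. This builds agreement on $C_c(\R)$ into the construction; only density of $F$ is needed, for the domination $\int\varphi\le p(\varphi|_F)$. The price is that you effectively reprove amenability of the abelian group $F$: subadditivity of $p'$ uses commutativity via the product family $s_j+t_k$. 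The paper simply quotes amenability once and reuses it elsewhere.

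\textbf{Positivity of $Q$.} The paper gets positivity by invoking Lemma~\ref{lem:factor} for $F\subset\R$, which identifies $Q$ with a conditional expectation. Your argument is more elementary and self-contained. Each $S_{\varphi_\varepsilon}$ is positive and fixes $1$, because $V_t$ is a composition operator and $I_{\C}\ge0$. These properties then pass to the strong limit \eqref{eq:smoothing}. It yields only positivity and $Q1=1$, not the conditional-expectation description, but that is all the lemma asserts.

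\textbf{Two cosmetic slips.}
\begin{itemize}
\item Under the paper's convention, the conjugate-linear functional to represent by Riesz is $w\mapsto I_{\C}(\ip{w}{W(\cdot)})$ itself. Its complex conjugate is linear, not conjugate-linear as you wrote.
\item In the step on $H_v^\perp$, both facts you list say $S_\varphi v\in H_v^\perp$. What actually forces $S_\varphi v=0$ is combining this with the range statement $S_\varphi v\in H_v$ that you proved just before.
\end{itemize}
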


Appendix~\ref{app:height-averaging} proves the lemma. In the application
here, $H=L^2(\mu)$ and $V_t=T_{(0,t)}$. Thus the limit is the vertical
continuous-observation projection, while
\eqref{eq:height-support-bound} controls an average using only the
algebraic heights in its kernel's support. For example,
$\varphi_\varepsilon(t)=\varepsilon^{-1}
\max\{1-|t|/\varepsilon,0\}$ is a permitted kernel: its graph is a
triangle of area one supported on $[-\varepsilon,\varepsilon]$.

Apply Lemma~\ref{lem:smoothing} with $V_t=T_{(0,t)}$ and set
\[
 a_i=Qp_i.
\]
In this probability model $Q$ preserves nonnegative functions and the
constant one, so $0\leq a_i\leq1$ and $\sum_i a_i=1$ as
$L^2(\mu)$ identities. Every algebraic translation $T_g$ preserves
$H_v$, since it commutes with the vertical translations. Its inverse
$T_{-g}$ also preserves $H_v$, and consequently $T_g$ preserves
$H_v^\perp$ as well. The commutation observation in
Appendix~\ref{app:commutation} therefore gives
\[
 QT_g=T_gQ\qquad(g\in A).
\]
In particular, horizontal continuity survives this second projection:
\[
 \norm{T_{(z,0)}a_i-a_i}_2
 =\norm{Q(T_{(z,0)}p_i-p_i)}_2
 \leq\norm{T_{(z,0)}p_i-p_i}_2\longrightarrow0.
\]
The new weights are vertically continuous by their membership in $H_v$.

We must also check that the averaging preserves the zero correlation
at $u_0$. Define the bounded scalar function
\[
 c_i(s)=\ip{p_i}{T_{(z_0,t_0+s)}p_i}\qquad(s\in F).
\]
It is bounded by $\norm{p_i}_2^2$ everywhere, and
\eqref{eq:local} controls it near zero. Moving the bounded operator
$T_{u_0}$ through the vector integral and then testing against $p_i$
gives the exact identity
\[
 \begin{aligned}
 \ip{p_i}{T_{u_0}S_{\varphi_\varepsilon}p_i}
 &=\ip{p_i}{I_H\bigl(s\mapsto
       \varphi_\varepsilon(s)T_{u_0}T_{(0,s)}p_i\bigr)}\\
 &=I_{\C}\bigl(\varphi_\varepsilon c_i\bigr).
 \end{aligned}
\]
The right-hand side is defined even though $c_i$ need not be continuous
or supported on a bounded interval: multiplying by the kernel puts it
in the domain of $I_{\C}$. The support bound
\eqref{eq:height-support-bound} and \eqref{eq:local} imply
\[
 \left|I_{\C}\bigl(\varphi_\varepsilon c_i\bigr)\right|
 \leq\sup_{s\in F,\ |s|\leq\delta_\varepsilon}|c_i(s)|
 \longrightarrow0.
\]
On the other hand, \eqref{eq:smoothing} says that
$S_{\varphi_\varepsilon}p_i\to Qp_i=a_i$ in $L^2$ norm. Passing to
the limit in the scalar product yields
\[
 \ip{p_i}{T_{u_0}a_i}=0.
\]
Since $T_{u_0}a_i\in H_v$ and $p_i-a_i\in H_v^\perp$, the
orthogonal part contributes nothing. Hence
\[
 \ip{a_i}{T_{u_0}a_i}=0
 \qquad(u_0\in A,\ |u_0|=1,\ u_0\text{ nonvertical}).
\]

Combining the horizontal and vertical estimates gives joint continuity:
\[
 \norm{T_{(z,t)}a_i-a_i}_2
 \leq\norm{T_{(z,0)}a_i-a_i}_2
     +\norm{T_{(0,t)}a_i-a_i}_2
 \longrightarrow0
 \qquad((z,t)\in A,\ (z,t)\to0).
\]
Translations extend from the dense subgroup $A$ to all real parameters
on the subspace of jointly continuous vectors. Concretely, for
$g_n\in A$ tending to $x\in\R^3$, define $T_xv$ as the $L^2$ limit
of $T_{g_n}v$. Joint continuity and
$\norm{T_{g_n}v-T_{g_m}v}_2=\norm{T_{g_n-g_m}v-v}_2$ make the
sequence Cauchy and the limit independent of the approximation.
The extensions preserve inner products and the translation group law,
and their orbits are norm-continuous. All real translates $T_xa_i$
still lie in $[0,1]$ and sum to one, because these properties are
preserved by $L^2$ limits. This extends operators on observations;
it requires no pointwise action of real translations on $\Omega$.

Nonvertical algebraic unit vectors are dense in the whole sphere.
Indeed, the rational stereographic parametrization
\[
 (r,s)\longmapsto
 \frac{(2r,2s,1-r^2-s^2)}{1+r^2+s^2},
 \qquad (r,s)\in\Q^2\setminus\{(0,0)\},
\]
uses rational, hence algebraic, coordinates and has nonzero horizontal
part. Rational approximation gives every nonpolar direction in its
closure. Taking $(r,s)\to(0,0)$ or $r^2+s^2\to\infty$ gives the two
poles. Continuity now extends the zero correlation to every real unit
direction, and preservation of inner products gives
\begin{equation}\label{eq:continuous-correlation}
 \ip{T_xa_i}{T_{x+u}a_i}=\ip{a_i}{T_ua_i}=0
 \qquad(x\in\R^3,\ u\in S^2).
\end{equation}

\section{Choosing a measurable coloring}\label{sec:sample}

Each $a_i$ is still a function of the random coloring, and
$x\mapsto T_xa_i$ is a continuous family of $L^2(\mu)$ classes.
To select a realization $\omega$, we need actual values depending
measurably on both $x$ and $\omega$. Independent choices of
representatives at uncountably many positions need not have that
property. The following lemma supplies these values.

\begin{lemma}[Jointly Borel representatives]\label{lem:representatives}
Let $\Omega$ be a compact metric space with a Borel probability
measure $\mu$, and use the completion of $\mu$ when forming
$L^2(\mu)$. Suppose $x\mapsto u_x$ is a norm-continuous map from
$\R^3$ to $L^2(\mu)$ and $0\leq u_x\leq1$ almost surely for every
$x$. Then there are a jointly Borel function
$H:\R^3\times\Omega\to[0,1]$ and a Borel Lebesgue-null set
$N\subset\R^3$ such that $H(x,\cdot)$ represents $u_x$ in
$L^2(\mu)$ for every $x\notin N$. In particular, for every fixed
$\omega\in\Omega$, the spatial function $x\mapsto H(x,\omega)$ is
Borel measurable.
\end{lemma}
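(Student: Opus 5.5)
The plan is to build $H$ as an almost-everywhere limit of piecewise-constant-in-$x$ approximations, in the standard way one produces jointly measurable versions of continuous $L^2$-valued maps.

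First, fix for each point $q$ of the countable dense set $\Q^3$ a Borel representative $h_q:\Omega\to[0,1]$ of $u_q$. Such a representative exists: a completed-measurable function agrees almost surely with a Borel one, and truncating to $[0,1]$ changes it only on a null set, since $0\le u_q\le1$ almost surely. For $n\ge1$ and $x\in\R^3$ let $q_n(x)$ be the point of the grid $2^{-m_n}\Z^3$ obtained by rounding each coordinate down. Here $m_n$ is an increasing sequence to be chosen. Put
\[
 H_n(x,\omega)=h_{q_n(x)}(\omega).
\]
The map $x\mapsto q_n(x)$ is Borel with countable range, so $H_n$ is jointly Borel and $[0,1]$-valued.

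To choose $m_n$, use norm continuity, which is uniform on the compact ball $\overline{B(0,n)}$: pick $m_n$ so that $\norm{u_x-u_y}_2\le 4^{-n}$ whenever $|x|,|y|\le n+1$ and $|x-y|\le\sqrt3\,2^{-m_n}$. Then $\norm{H_n(x,\cdot)-u_x}_2\le4^{-n}$ for all $|x|\le n$. By Chebyshev,
\[
 \mu\{|H_n(x,\cdot)-u_x|>2^{-n}\}\le 4^{-n}
\]
for such $x$. Comparing consecutive terms gives $\mu\{|H_n(x,\cdot)-H_{n+1}(x,\cdot)|>2\cdot2^{-n}\}\le 2\cdot4^{-n}$. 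For each fixed $x$ these probabilities are summable, so Borel--Cantelli shows that $H_n(x,\omega)$ converges for $\mu$-almost every $\omega$, and the limit represents $u_x$ because $H_n(x,\cdot)\to u_x$ in $L^2$.

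Now define $H=\limsup_n H_n$, which is jointly Borel and $[0,1]$-valued. The first plan was to let $N$ be the set of $x$ where convergence fails on a $\mu$-non-null set. As posed, that argument even gives $N=\emptyset$, since convergence holds almost surely for \emph{every} $x$. The real issue is measurability of the set $N$ and of the exceptional sets, but these are only needed in the weaker form stated. Set
\[
 G=\{(x,\omega):\lim H_n\text{ exists}\}.
\]
This set is Borel, and each section $G_x$ is $\mu$-conull. So $H(x,\cdot)=\lim H_n(x,\cdot)$ almost surely, and this limit represents $u_x$ for every $x$. One may therefore take $N=\emptyset$, or any Borel null set. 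The final claim, that $x\mapsto H(x,\omega)$ is Borel for fixed $\omega$, is just measurability of sections of a jointly Borel function on a product of Polish spaces.

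The main obstacle is the first step: producing Borel representatives from completed-measurable classes and keeping every object Borel rather than merely completed-measurable. If working with the completed measure causes trouble, I would fall back on Tonelli in the form of Appendix~\ref{app:representatives}. I would integrate $\int\!\!\int|H_n-H_{n+1}|\,d\mu\,dx$ over balls and use the Fubini-type exception to produce a Lebesgue-null Borel set $N$ outside of which the sections behave. That route explains why the statement allows an exceptional $N$.
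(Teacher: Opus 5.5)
Your proof is correct, and it reaches a stronger conclusion than the paper's by a different limiting argument.

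Both constructions start the same way. Borel $[0,1]$-valued representatives at sample points are assembled into jointly Borel step functions $H_n$, with $\norm{H_n(x,\cdot)-u_x}_2$ uniformly small on a bounded region.

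The paper then passes to the limit in the product space. On each shell $W_m$ it shows $(H_n)$ is Cauchy in $L^2(W_m\times\Omega)$ and extracts an almost-everywhere convergent subsequence. It defines the limit on the Borel convergence set, and uses Fubini and bounded convergence to see that the sections represent $u_x$ for almost every $x$. The Borel null set $N$ comes from that Fubini step.

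You instead pass to the limit separately for each fixed $x$. Your error bound holds at every $x$ once $n\ge|x|$, so it is summable in $n$. Borel--Cantelli then gives almost-sure convergence at every $x$. Equivalently, $\int_\Omega\sum_n|H_n(x,\cdot)-u_x|\,d\mu\le\sum_n\norm{H_n(x,\cdot)-u_x}_2<\infty$. So $H=\limsup_nH_n$ is jointly Borel with no subsequence or convergence set to choose.

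What each route buys:
\begin{itemize}
\item Yours avoids product $L^2$ spaces and Fubini altogether and yields $N=\varnothing$. That would also make the $N\cup(N-u)$ bookkeeping in the sampling step unnecessary. In fact the paper's own rate $2^{-n}$ is already summable, so your pointwise observation would remove its exceptional set as well.
\item The paper's product-space route is the one that still works when only integrated, rather than pointwise-uniform, approximation is available.
\end{itemize}

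Two cosmetic points. Take uniform continuity on $\overline{B(0,n+1)}$ and require $\sqrt3\,2^{-m_n}\le1$, so that $q_n(x)$ lies in that ball whenever $|x|\le n$. Your closing fallback via Tonelli is not needed.
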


Appendix~\ref{app:representatives} gives the full proof. Its construction
approximates the continuous map on bounded spatial regions by finite
Borel step functions, chooses Borel representatives of their finitely
many values, and takes a limit on the product space. The resulting
exception is a spatial null set; no assertion of simultaneous
pointwise equality at every $x$ is needed.

Apply the lemma to each $x\mapsto T_xa_i$, obtaining $H_i$.
Take the finite union of the exceptional sets and call it $N$.
For $x\notin N$, all sections represent the corresponding translated
weights, and thus
\[
 \sum_{i=1}^k H_i(x,\omega)=1
 \quad\text{for }\mu\text{-almost every }\omega.
\]
For a fixed unit direction $u$, the starting points at which either
section $H_i(x,\cdot)$ or $H_i(x+u,\cdot)$ might fail to represent
its vector lie in $N\cup(N-u)$. This set is null by translation
invariance of Lebesgue measure. Integrating in $u$ shows that the
exceptional endpoint pairs have zero $dx\,d\sigma(u)$ measure.
At every other pair, \eqref{eq:continuous-correlation} says
\[
 \int_\Omega H_i(x,\omega)H_i(x+u,\omega)\,d\mu(\omega)=0.
\]

The functions in this integral are nonnegative and jointly Borel.
Tonelli's theorem therefore permits the following actual exchange of
the three integrations, for each finite $R>0$:
\begin{equation}\label{eq:sample-triple-integral}
 \begin{aligned}
 &\int_\Omega\left[
   \int_{B(0,R)}\int_{S^2}
   H_i(x,\omega)H_i(x+u,\omega)\,d\sigma(u)\,dx
   \right]d\mu(\omega)\\
 &\qquad=\int_{B(0,R)}\int_{S^2}\left[
   \int_\Omega H_i(x,\omega)H_i(x+u,\omega)\,d\mu(\omega)
   \right]d\sigma(u)\,dx=0.
 \end{aligned}
\end{equation}
Thus the nonnegative random variable in the first brackets is zero
for $\mu$-almost every $\omega$. The sum-one identity gives, by the
same theorem,
\[
 \int_\Omega\left[
  \int_{B(0,R)}\left|\sum_{i=1}^kH_i(x,\omega)-1\right|\,dx
 \right]d\mu(\omega)=0.
\]
Its bracketed random variable also vanishes almost surely.

Choose one $\omega$ for which these conclusions hold for all
$i\in\{1,\ldots,k\}$ and all positive integer radii $R$.
This involves only countably many events of probability one, so their
intersection still has probability one. Put $h_i(x)=H_i(x,\omega)$.
These are Borel functions of the spatial variable. The integer balls
cover space, so their sum is one outside the Borel null set
\[
 Z=\left\{x:\sum_{i=1}^kh_i(x)\ne1\right\}.
\]
Moreover, their same-color product integrals vanish on every bounded
ball, because each such ball is contained in one of the integer balls.

At $x\notin Z$, choose the least index $i$ with $h_i(x)\geq1/k$;
one exists since the $k$ nonnegative weights sum to one. On $Z$ choose
color one. The resulting color classes $B_1,\ldots,B_k$ form a Borel
partition. If $x,x+u\notin Z$ both receive color $i$, then
\[
 \one_{B_i}(x)\one_{B_i}(x+u)
 \leq k^2h_i(x)h_i(x+u).
\]
The right side has zero integral over $B(0,R)\times S^2$. The pairs
with an endpoint in $Z$ also have zero measure: for each fixed $u$
their starting points lie in the null set $Z\cup(Z-u)$, and Tonelli
integrates this fact over the directions. Thus the partition satisfies
\eqref{eq:weak} for every color and every radius. This proves
Theorem~\ref{thm:transfer}. \hfill$\square$

\section{Removing the exceptional pairs}\label{sec:completion}

We now return to ordinary subsets of Euclidean space and integration
against spatial Lebesgue measure. The preceding argument produced a
measurable partition with almost no monochromatic unit pairs, measured
in position and unit direction. We will turn it into a coloring that
avoids every such pair. This step is independent of the rigidity
assumption and works in every positive dimension.

The overlap and density-point arguments below follow the methods
in~\cite[proof of Lemma~B.2]{Davies}
and~\cite[proof of Lemma~5.6]{DOV}.
The finite-translation completion is the argument
of~\cite[Lemma~B.1]{Davies}, whose stated scope is $d\ge2$.
We include the proof for all $d\ge1$. In dimension one the uniform
measure on the unit sphere $\{-1,1\}$ assigns mass $1/2$ to each direction.

\begin{lemma}[Completion of a weak coloring]\label{lem:completion}
For every integer $d\ge1$ and every positive integer $k$, a weakly
proper measurable $k$-coloring of $\R^d$ can be replaced by a proper
Lebesgue-measurable $k$-coloring.
\end{lemma}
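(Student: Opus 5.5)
We follow the outline given in the proof sketch. Let $B_1,\ldots,B_k$ be the weakly proper partition. Let $D_i$ be the set of Lebesgue density-one points of $B_i$, that is, the points $x$ with $|B_i\cap B(x,r)|/|B(x,r)|\to1$ as $r\to0$. By the Lebesgue density theorem $B_i\mathbin{\triangle}D_i$ is null, so the $D_i$ are pairwise disjoint measurable sets whose union $U$ is conull.

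\emph{Step 1: $D_i$ contains no unit pair.} Suppose $x,y\in D_i$ with $|x-y|=1$. For small $r$, most of $B(x,r)$ and most of $B(y,r)$ lie in $B_i$. For a point $x'\in B(x,r)$, the set of directions $u$ with $x'+u\in B(y,2r)$ has $\sigma_d$-measure comparable to $r^{d-1}$. The measure of $\{x'+u : u\ \text{in this cap}\}$, taken with the natural weighting, is controlled by the fraction of $B(y,2r)$ lying outside $B_i$. Averaging over $x'\in B(x,r)$ with the coarea formula, the $dx\,d\sigma$-measure of pairs $(x',u)$ with $x'\in B_i\cap B(x,r)$ and $x'+u\in B_i$ is at least $c\,r^{d}\cdot r^{d-1}(1-o(1))>0$, which contradicts \eqref{eq:weak}. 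I expect this quantitative overlap estimate to be the main technical point, and I would follow \cite[proof of Lemma~5.6]{DOV}. The cleanest route is to note that the map $(x',u)\mapsto(x',x'+u)$ carries $dx\,d\sigma$ to a measure on $B(x,r)\times B(y,2r)$ supported on the unit-distance locus. That measure is comparable to the surface measure of this locus, whose density is uniform. Dimension one needs a separate check: the pairs are $x'\pm1$, and the estimate is one-dimensional with $\sigma_1$ the two atoms.

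\emph{Step 2: finite completion.} Let $P$ be a finite subset of $\R^d$. We must show that $P$ can be colored properly so that every $p\in P\cap U$ keeps its color from the $D_i$. For each $p\in P\cap D_{i(p)}$, the set of small translations $v\in B(0,r)$ with $p+v\in D_{i(p)}$ has proportion tending to one as $r\to0$. For $q\in P\setminus U$, the set of $v$ with $q+v\in U$ is conull. Intersecting finitely many such sets, we get some small $v$ with $p+v\in D_{i(p)}$ for all $p\in P\cap U$ and $q+v\in U$ for all $q\in P$. Color each $q\in P$ by the index of the $D_j$ containing $q+v$. Translation preserves distances, so by Step 1 this is proper, and it agrees with the prescribed colors.

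\emph{Step 3: compactness.} Consider the space $\{1,\ldots,k\}^{\R^d}$ with the product topology, which is compact by Tychonoff. For each finite $P$, the set of colorings that are proper on $P$ and agree with the $D_i$ on $P\cap U$ is closed and nonempty. These sets have the finite intersection property, since a union of finitely many finite sets is finite. Hence some coloring $c$ is proper everywhere and equals $i$ on each $D_i$. Its color classes differ from the $D_i$ only inside the null set $\R^d\setminus U$, so with respect to the completed Lebesgue measure they are measurable.
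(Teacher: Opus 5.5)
Your proof is correct. Steps~2 and~3 match the paper's finite-translation and compactness arguments; in Step~2 you implicitly use that $D_i$ has density one at each of its own points, which holds because $D_i$ differs from $B_i$ by a null set.

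The genuine difference is Step~1. The paper never localizes weak properness near a pair of points. It forms the Gaussian-weighted autocorrelation $G_i(v)=\int g_i(x)g_i(x+v)\,dx$, which is continuous because translation is continuous in $L^2$. Weak properness gives $G_i=0$ for $\sigma$-almost every unit $v$. This upgrades to every unit $v$ because nonempty open caps have positive mass (for $d=1$, the two atoms do). So for each single fixed direction $u$ the set $B_i\cap(B_i-u)$ is null, and density one of both $B_i$ and $B_i-u$ at $x$ is then immediately contradictory.

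Your route instead counts monochromatic pairs near $(x,y)$ directly. For $x'\in B(x,r)$ one has the lower bound $\sigma\{u:x'+u\in B(y,2r)\}\ge c\,r^{d-1}$. The pairs with $x'+u\notin B_i$ are controlled by a plain Tonelli swap: $\int_{B(x,r)}\sigma\{u:x'+u\in B(y,2r)\setminus B_i\}\,dx'\le C r^{d-1}\vol(B(y,2r)\setminus B_i)=o(r^{2d-1})$, since the relevant directions lie in a cap of radius $3r$. The pairs with $x'\notin B_i$ are handled the same way. Together these leave positive $dx\,d\sigma$-mass. The coarea formula and the unit-distance-locus picture are not needed for this.

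Your version is quantitative and avoids the continuity argument, but it needs two-sided cap estimates and a separate $d=1$ check. The paper's version is softer: it uses only positivity of open caps, treats all $d$ uniformly, and yields the stronger intermediate fact that every unit translate of $B_i$ meets $B_i$ in a null set.
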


\begin{proof}
Let $B_1,\ldots,B_k$ be the given color classes. Choose Borel
representatives modulo null sets, make them disjoint in order, and
assign any remaining points to the last class. This gives a Borel
partition differing from the original only on a null set. We continue
to call its classes $B_i$. As observed after~\eqref{eq:weak}, this
change preserves weak properness: for each fixed direction $u$,
the pairs whose initial or final point meets a spatial null set have
only a null set of initial points.

\paragraph{An overlap identity in every direction.}
Define
\[
 g_i(x)=e^{-\norm{x}^2}\one_{B_i}(x),
 \qquad
 G_i(v)=\int_{\R^d}g_i(x)g_i(x+v)\,dx.
\]
The positive Gaussian puts $g_i$ in $L^2(\R^d)$ even when $B_i$ has
infinite volume. Its positivity also means that $G_i(v)>0$ exactly
when $B_i$ and $B_i-v$ overlap on a set of positive volume.

Ordinary spatial translations are continuous in $L^2$: if
$g\in L^2(\R^d)$, then $\norm{g(\cdot+h)-g}_2\to0$ as $h\to0$.
To see this, approximate $g$ in $L^2$ by a continuous function of
compact support. Uniform continuity proves the assertion for the
approximation, and translations preserve the norm of the error.
Consequently, Cauchy--Schwarz gives
\[
 |G_i(v)-G_i(w)|
 \le \norm{g_i}_2
       \norm{g_i(\cdot+v)-g_i(\cdot+w)}_2,
\]
so the nonnegative function $G_i$ is continuous.

Weak properness on all positive integer-radius balls, followed by
Tonelli's theorem, gives
$\one_{B_i}(x)\one_{B_i}(x+u)=0$ for almost every position--direction
pair. Multiplying by the Gaussian factors and integrating shows that
$G_i(u)=0$ for almost every unit direction. In dimensions $d\ge2$,
every nonempty relatively open part of the unit sphere has positive
surface measure. A continuous nonnegative function on that sphere
which vanishes almost everywhere must therefore vanish everywhere.
For $d=1$, almost-everywhere vanishing already includes both
directions, since each has positive mass. Thus, in all dimensions,
\begin{equation}\label{eq:completion-overlap}
 G_i(u)=0\qquad(\norm{u}=1).
\end{equation}

\paragraph{Independent sets of density-one points.}
A point $x$ has \emph{density one} in a measurable set $B$ if
\[
 \lim_{r\downarrow0}
 \frac{\vol(B\cap B(x,r))}{\vol(B(x,r))}=1.
\]
This says that $B$ occupies nearly all sufficiently small balls
about $x$; it requires neither that $x\in B$ nor that $B$ contain
a neighborhood of $x$. Let $C_i$ be the set of density-one points
of $B_i$.

These sets are Borel. Indeed, for each fixed $r>0$ the ball average
\[
 A_{i,r}(x)=\frac{\vol(B_i\cap B(x,r))}{\vol(B(0,r))}
\]
is continuous in $x$: its change between two centers is bounded
by the volume of the symmetric difference of their balls, divided
by $\vol(B(0,r))$, and that bound tends to zero as the centers
approach one another. Moreover,
\[
 C_i=\{x:\lim_{n\to\infty}A_{i,1/n}(x)=1\}.
\]
The sequence $1/n$ detects the full density limit because any
intermediate radius lies between $1/(n+1)$ and $1/n$, whose ball
volumes have ratio tending to one. The displayed limit condition
is Borel, since it uses only countably many continuous functions.

The Lebesgue density theorem states that almost every point of
$B_i$ has density one in $B_i$, and almost every point of its
complement has density zero in $B_i$. It follows that
\[
 \vol(C_i\mathbin{\triangle}B_i)=0.
\]
The $C_i$ are disjoint: two disjoint classes cannot both occupy a
proportion tending to one of the same shrinking ball. Their union
$C_*=\bigcup_{i=1}^k C_i$ is conull, meaning that its complement has
measure zero. Also, every point of $C_i$ has density one in $C_i$
itself, because changing a set on a null set leaves every ball
volume unchanged.

Each $C_i$ is independent in the unit-distance graph. Suppose,
to the contrary, that $x,x+u\in C_i$ and $\norm{u}=1$. Then both
$B_i$ and $B_i-u$ have density one at $x$. In a sufficiently small
ball about $x$, each misses less than one quarter of the volume.
Their intersection therefore occupies more than half the ball.
On this positive-volume intersection,
$g_i(q)g_i(q+u)>0$, contradicting~\eqref{eq:completion-overlap}.
We have obtained a proper coloring on $C_*$, with color $i$ on $C_i$.

\paragraph{A compatible coloring of each finite set.}
The remaining null set may have unit edges both to $C_*$ and
within itself, so we must extend the coloring while retaining its
prescribed values on $C_*$. Fix a finite set $V\subset\R^d$.
We seek one small translation $t$ such that
\begin{equation}\label{eq:completion-shift}
 x+t\in C_i\quad(x\in V\cap C_i),
 \qquad
 x+t\in C_*\quad(x\in V\setminus C_*).
\end{equation}
We could then color each $x\in V$ by the color of $x+t$ in $C_*$.
This would respect every prescribed color on $V\cap C_*$, and
it would be proper because translation preserves distances.

Let $m=|V\cap C_*|$. If $m>0$, density one at each prescribed
point gives a radius $r>0$ such that, for every $x\in V\cap C_i$,
the bad shifts
$\{t\in B(0,r):x+t\notin C_i\}$ occupy less than
$1/(2m)$ of the ball. A common radius exists because there are
only finitely many prescribed points. The union bound leaves
more than half the ball satisfying all these requirements.
If $m=0$, begin with any ball about zero.
For each remaining $x\in V\setminus C_*$, the shifts failing
$x+t\in C_*$ form a null set. Removing these finitely many null
sets still leaves a suitable $t$. The radius can be chosen
arbitrarily small, but it and the translation may depend on $V$.

\paragraph{Compactness with the prescribed colors.}
Consider the compact product
$X=\{1,\ldots,k\}^{\R^d}$, with the finite color set discrete.
An element of $X$ assigns a color to every spatial point.
For every unit edge $\{x,y\}$ impose the constraint $c(x)\ne c(y)$;
for every $x\in C_i$ impose $c(x)=i$.
Each constraint defines a closed subset of $X$, because it
depends on finitely many coordinates. Every finite collection
of constraints involves a finite vertex set $V$, and the
translation argument gives a coloring of $V$ satisfying them.
Extend that finite assignment arbitrarily to the other
coordinates. Thus every finite collection of these closed sets
has nonempty intersection. Compactness gives a point in their
total intersection: a proper coloring of all $\R^d$ retaining
the color of every point in $C_*$.
This is the graph-compactness argument of
de Bruijn and Erd\H{o}s~\cite{dBE}, with fixed-color constraints
included. No limiting translation is needed.

The final color classes have the form
\[
 \widetilde B_i=C_i\cup N_i,
 \qquad N_i\subseteq\R^d\setminus C_*.
\]
The sets $C_i$ are Borel, and the remainder is a Borel null set.
Every subset $N_i$ of that remainder is measurable for completed
Lebesgue measure. Hence the $\widetilde B_i$ are Lebesgue measurable,
as required. The compactness argument does not assert that these
final classes are Borel.
\end{proof}

\section{Consequences}\label{sec:consequences}

\begin{proof}[Proof of Theorem~\ref{thm:main}]
Assume the character-law rigidity statement in
Theorem~\ref{thm:rigidity}. Given a proper $k$-coloring of $\R^3$,
Theorem~\ref{thm:transfer} produces a weakly proper measurable
$k$-coloring, and Lemma~\ref{lem:completion} completes it to a
proper Lebesgue-measurable $k$-coloring. Both steps retain the
number of colors. Thus $\chi_m(\R^3)\le\chi(\R^3)$; the reverse
inequality follows from the definitions.
\end{proof}

\paragraph{Upper density and the numerical bounds.}
For a measurable independent set $B\subset\R^3$, define
\[
 \overline\delta(B)=\limsup_{L\to\infty}
   \frac{\vol(B\cap[-L,L]^3)}{(2L)^3},
 \qquad
 m_1(\R^3)=
   \sup_{\substack{B\text{ measurable}\\B\text{ independent}}}
   \overline\delta(B).
\]
These are the centered-cube upper density and its supremum over
measurable sets avoiding unit distance.
The independent sets $C_1,\ldots,C_k$ obtained in the completion
proof already form a partition up to a null set. Their volume
fractions in each centered cube therefore sum to one. Taking
upper limits and using that the sum is finite gives
\begin{equation}\label{eq:density-bound}
 1\le
 \sum_{i=1}^k\limsup_{L\to\infty}
       \frac{\vol(C_i\cap[-L,L]^3)}{(2L)^3}
 \le k\,m_1(\R^3).
\end{equation}
No density limit for an individual class is needed.

De Oliveira Filho and Vallentin report the upper bound
$m_1(\R^3)\le0.165609$ in~\cite[Table~3.1]{OFV},
with the same centered-cube convention. Since $0.165609<1/6$,
\eqref{eq:density-bound} rules out a weakly proper measurable
coloring with at most six colors. In particular,
$\chi_m(\R^3)\ge7$. 

\begin{corollary}\label{cor:bounds1}
Under the character-law rigidity assumption of Theorem~\ref{thm:rigidity},
\[
 7\leq\chi(\R^3)=\chi_m(\R^3)\leq15.
\]
\end{corollary}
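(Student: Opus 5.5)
The plan is to combine the results already established. The corollary has three parts: the equality, the lower bound $7$, and the upper bound $15$.

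For the equality, assume Theorem~\ref{thm:rigidity}. Since $\chi(\R^3)$ is finite (it is at most $15$, see below), set $k=\chi(\R^3)$ and fix a proper $k$-coloring. Theorem~\ref{thm:main}, via Theorem~\ref{thm:transfer} and Lemma~\ref{lem:completion}, then gives a proper Lebesgue-measurable $k$-coloring. Hence $\chi_m(\R^3)\le\chi(\R^3)$, and the reverse inequality holds by definition.

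The upper bound $\chi(\R^3)\le15$ comes from the explicit periodic constructions of Coulson~\cite{Coulson} and Radoi\v{c}i\'c--T\'oth~\cite{RT}. Their color classes are finite unions of polytopes, so they are measurable. This already gives $\chi_m(\R^3)\le15$ independently of rigidity, and it also guarantees that $k$ above is finite.

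For the lower bound, the key step is the density inequality~\eqref{eq:density-bound}. Suppose there is a proper measurable coloring with $k\le6$ colors. It is in particular weakly proper. Running the first part of the proof of Lemma~\ref{lem:completion} yields measurable independent sets $C_1,\dots,C_k$ covering $\R^3$ up to a null set. In every centered cube their volume fractions sum to one. Subadditivity of $\limsup$ over the finitely many classes gives $1\le k\,m_1(\R^3)$. The bound $m_1(\R^3)\le0.165609<1/6$ of~\cite[Table~3.1]{OFV} then forces $k\ge7$. Therefore $\chi_m(\R^3)\ge7$, and by the equality $\chi(\R^3)\ge7$.

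No step is a genuine obstacle, since everything is assembled from earlier results. The only points needing care are these:
\begin{itemize}
\item One must check that the density convention in~\cite{OFV} matches the centered-cube upper density used here, as the paper notes.
\item Finiteness of $\chi(\R^3)$ must be invoked before applying Theorem~\ref{thm:main}, since that theorem is stated for a positive integer $k$.
\end{itemize}
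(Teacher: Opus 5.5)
Your proposal is correct and follows the paper's argument: you get the equality from Theorem~\ref{thm:main} (with finiteness supplied by the $15$-colorings), the lower bound from the density inequality~\eqref{eq:density-bound} together with $m_1(\R^3)\le0.165609<1/6$ from~\cite{OFV}, and the upper bound from~\cite{Coulson,RT}. Two small corrections: the color classes of those periodic constructions are countable, not finite, unions of tiles, and points lying on several tiles need a color rule (the paper takes the least available tile color), though Lebesgue measurability holds anyway since those shared boundary points form a null set. Also, for a proper measurable coloring you can apply the density count directly to its classes, without passing through the sets $C_i$.
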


\begin{proof}
Under the character-law rigidity assumption,
Theorem~\ref{thm:main} transfers the measurable lower bound $7$
to the ordinary chromatic number. The upper bound $15$ comes
from the geometric constructions of Coulson and of
Radoi\v{c}i\'{c} and T\'oth~\cite{Coulson,RT}.
For the measurable upper bound, the latter construction uses
a countable family of Borel tiles; at points belonging to
several tiles, choosing the least available tile color gives
Borel color classes. Thus, under the stated rigidity assumption,
\[
 7\le\chi(\R^3)=\chi_m(\R^3)\le15.
\]
\end{proof}

\begin{corollary}[A separate planar consequence]\label{cor:plane}
Assume the planar weak-transfer statement
of~\cite[Theorem~1.3]{OpenAI}: for every positive integer $k$,
a proper $k$-coloring of $\R^2$ gives a weakly proper measurable
$k$-coloring of $\R^2$. Then
\[
 \chi(\R^2)=\chi_m(\R^2).
\]
\end{corollary}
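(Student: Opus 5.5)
The plan is to combine the assumed planar weak-transfer statement with the completion step, Lemma~\ref{lem:completion}. This mirrors the proof of Theorem~\ref{thm:main}. Lemma~\ref{lem:completion} was deliberately proved for every dimension $d\ge1$ and does not use the rigidity assumption, so in dimension $d=2$ it can be applied verbatim.

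Fix a positive integer $k$ and suppose $\R^2$ has a proper $k$-coloring. The assumed statement of~\cite[Theorem~1.3]{OpenAI} supplies a weakly proper measurable $k$-coloring $B_1,\ldots,B_k$ of $\R^2$.

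The one point needing care is that "weakly proper" in the cited statement matches the definition~\eqref{eq:weak} used here. That definition is the vanishing of
\[
 \int_{B(0,R)}\int_{S^{1}}\one_{B_i}(x)\one_{B_i}(x+u)\,d\sigma_2(u)\,dx
\]
for every $i$ and every $R>0$, where $\sigma_2$ is normalized arc-length measure on the circle. Since the integrand is nonnegative, any formulation asserting that monochromatic unit pairs form a null set of position--direction pairs is equivalent to this one. This holds whatever normalization of the circle measure is used, and whether the condition is imposed over all balls or only over integer radii, since the integer balls exhaust every bounded ball. The equivalence follows by Tonelli's theorem, as in the remark after~\eqref{eq:weak}. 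I expect this translation of definitions to be the only real obstacle, and it is mild.

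Next apply Lemma~\ref{lem:completion} with $d=2$. This replaces $B_1,\ldots,B_k$ by a proper Lebesgue-measurable $k$-coloring of $\R^2$ with the same number of colors. Taking $k=\chi(\R^2)$, which is finite by the classical hexagonal $7$-coloring, gives $\chi_m(\R^2)\le\chi(\R^2)$. The reverse inequality $\chi(\R^2)\le\chi_m(\R^2)$ is immediate from the definitions. Hence $\chi(\R^2)=\chi_m(\R^2)$.
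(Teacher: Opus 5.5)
Your proposal is correct and matches the paper's proof: apply the assumed planar weak transfer, then the dimension-free Lemma~\ref{lem:completion} with $d=2$, and get the reverse inequality from the definitions. Your added remarks on matching the definition of weak properness and on the finiteness of $\chi(\R^2)$ (via the hexagonal $7$-coloring) are harmless details that the paper leaves implicit.
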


\begin{proof}
Apply the stated planar weak transfer, followed by the
unconditional Lemma~\ref{lem:completion} with $d=2$.
Neither step increases the number of colors, and the reverse
comparison holds by definition.
\end{proof}

The planar statement uses its own weak-transfer assumption,
separately from the character-law rigidity input to the
three-dimensional argument. The latter gives the displayed
range, not an exact chromatic number or an improved upper bound.
Under that rigidity assumption, graph compactness also guarantees
a finite unit-distance graph in $\R^3$ which is not six-colorable;
the proof supplies no explicit finite seven-chromatic example.

\appendix
\numberwithin{equation}{section}

\section{Probability and functions of a random coloring}\label{app:probability}

The probability measure $\mu$ in the proof describes a random entire
coloring $\omega\in\Omega$. It differs from spatial volume $dx$ on
$\R^3$ and from the normalized area measure $\sigma$ on the sphere of
directions. This appendix records the background for averaging
colorings, treating their observations as vectors, and finally giving
those vectors jointly measurable values.

\subsection{Events, Borel sets, and null sets}

A \emph{$\sigma$-algebra} on a set is a collection of subsets closed
under complements and countable unions and containing the whole set.
Its members are the measurable sets, or \emph{events} in a probability
space. A probability measure assigns them numbers in $[0,1]$, has
total mass $1$, and is countably additive: the probability of a disjoint
countable union is the sum of the probabilities. A function is
measurable if inverse images of Borel sets are measurable.

On a topological space, the \emph{Borel sets} form the smallest
$\sigma$-algebra containing the open sets. Completing a Borel measure
adds every subset of every Borel null set, together with the sets
obtained by such null modifications. A function measurable for the
completion agrees almost everywhere with a Borel function; the
representative construction below explains the instance we use.
Lebesgue measure is completed Borel volume. Thus arbitrary assignments
on a spatial null set preserve Lebesgue measurability, as needed in
the final coloring, although they need not preserve Borel measurability.

\emph{Almost everywhere} means outside a measurable set of measure
zero; \emph{conull} means that the complement has measure zero.
For $\mu$ one also says \emph{almost surely}. Countably many
almost-sure requirements hold simultaneously almost surely. The
corresponding assertion fails for an uncountable family: each
$[0,1]\setminus\{t\}$ has Lebesgue measure $1$, but their intersection
over all $t\in[0,1]$ is empty. The sampling argument therefore makes
countably many integrated requirements before choosing one coloring.

\subsection{Existence of finite averaging sets}\label{app:finite-averaging}

For a discrete group $G$, write $\ell^\infty(G)$ for its bounded
complex-valued functions. An \emph{invariant mean} is a linear
functional $m:\ell^\infty(G)\to\C$ satisfying
\[
 m(1)=1,\qquad f\geq0\Longrightarrow m(f)\geq0,\qquad
 m(f(g\,\cdot))=m(f)\quad(g\in G).
\]
It is a normalized positive average unchanged by translation, and
$|m(f)|\leq\norm{f}_\infty$. A group admitting such a mean is called
\emph{amenable}. A mean need not come from a countably additive
probability measure. For example, on an infinite countable group a
translation-invariant probability on all subsets would have to give
every singleton the same mass, which is incompatible with countable
additivity and total mass $1$.

Two standard facts supply amenability here: every abelian group is
amenable, and a group with an amenable normal subgroup and amenable
quotient is amenable. These are Theorem~G.2.1 and
Proposition~G.2.2(ii) of~\cite{BHV}, respectively. The cited proposition
requires the normal subgroup to be closed; that condition is automatic
for the discrete groups used here.

Recall that $F$ is the field of real algebraic numbers,
$E=F+iF$, $A=E\times F$, and $K=\{u\in E:|u|=1\}$. The motion group
$\Gamma=A\rtimes K$ consists of maps $x\mapsto a+R_ux$, where
$R_u(z,t)=(uz,t)$; composition gives
\[
 (a,u)(b,v)=(a+R_ub,uv).
\]
The translations $(a,1)$ form a normal subgroup: conjugating a
translation by a motion is again a translation. This subgroup is
isomorphic to the abelian group $A$. The homomorphism $(a,u)\mapsto u$
has precisely the translations as its kernel, so the quotient, which
retains only the rotation parameter, is the abelian group $K$.
The two cited facts therefore make $\Gamma$ amenable. They also give
an invariant mean on the abelian group $F/\Z$, used later for height
averaging.

The usable finite-set consequence is the following specialization of
the F\o lner criterion~\cite[Theorem~G.5.1]{BHV}.
\begin{standard}[F\o lner criterion for a discrete group]
A discrete group $G$ is amenable if and only if, for every finite
$Q\subset G$ and every $\varepsilon>0$, there is a nonempty finite
$\Lambda\subset G$ such that
\[
 \frac{|g\Lambda\mathbin{\triangle}\Lambda|}{|\Lambda|}
 \leq\varepsilon\qquad(g\in Q).
\]
\end{standard}
The symbol $\triangle$ denotes symmetric difference, so the numerator
counts elements gained or lost under the indicated left translation.
The source states the criterion for locally compact groups, compact
test sets and left invariant measure. A discrete group is locally
compact, its compact sets are finite, and counting measure is left
invariant, giving exactly the statement above.

The group $\Gamma$ is countable because $F$ is countable. Enumerate it
as $g_1,g_2,\ldots$ and apply the criterion with
$Q_n=\{g_1,\ldots,g_n\}$ and $\varepsilon=1/n$. The resulting nonempty
finite sets $\Lambda_n$ satisfy
\[
 \frac{|g\Lambda_n\mathbin{\triangle}\Lambda_n|}{|\Lambda_n|}
 \longrightarrow0\qquad\text{for every fixed }g\in\Gamma.
\]
Thus each fixed motion changes a vanishing fraction of the finite
average. Countability suffices for this construction; no finite
generation or convergence uniform over all motions is asserted.

The second existence fact turns a sequence of finite coloring
distributions into an ordinary probability.
\begin{theorem}[Weak compactness of probabilities]\label{app:probability-compactness}
If $X$ is a compact metric space and $\mu_n$ are Borel probability
measures on $X$, some subsequence converges to a Borel probability
measure $\mu$ in the sense that
\[
 \int_X f\,d\mu_{n_j}\longrightarrow\int_X f\,d\mu
 \qquad\text{for every continuous }f:X\to\C.
\]
This is called \emph{weak convergence} of probability measures.
\end{theorem}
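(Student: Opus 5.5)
The plan is the classical route through separability of $C(X)$, a diagonal extraction, and the Riesz representation theorem. It suffices to treat real-valued continuous functions: a complex $f$ splits as $\operatorname{Re}f+i\operatorname{Im}f$, and both the integrals and their convergence are linear.

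\textbf{Separability of $C(X)$.} I would first show that $C(X;\R)$ with the sup norm is separable. Since $X$ is compact metric, it has a countable dense set $\{x_j\}$. The functions $x\mapsto d(x,x_j)$, together with the constant $1$, separate points. The rational polynomial combinations of these functions form a countable algebra over $\Q$, and its uniform closure is a closed subalgebra containing the constants and separating points. By Stone--Weierstrass this closure is all of $C(X;\R)$. Call the resulting countable dense set $\{f_m\}_{m\ge1}$.

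\textbf{Diagonal extraction.} For each $m$, the numbers $\int f_m\,d\mu_n$ lie in the bounded interval $[-\norm{f_m}_\infty,\norm{f_m}_\infty]$. I would extract successive subsequences along which the first, second, \dots\ of these sequences converge, and then pass to the diagonal subsequence $\mu_{n_j}$, along which every $\int f_m\,d\mu_{n_j}$ converges. This is the same diagonal device the paper uses for the compactness of $\Omega$.

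\textbf{Extension to all continuous functions.} For a general $f\in C(X;\R)$ and $\varepsilon>0$, pick $f_m$ with $\norm{f-f_m}_\infty<\varepsilon$. Since each $\mu_{n_j}$ is a probability,
\[
 \Bigl|\int f\,d\mu_{n_j}-\int f_m\,d\mu_{n_j}\Bigr|<\varepsilon
\]
uniformly in $j$. Hence $\bigl(\int f\,d\mu_{n_j}\bigr)_j$ is Cauchy, and its limit $L(f)$ exists. The functional $L$ is linear as a limit of linear functionals, positive because each $\mu_{n_j}$ is positive, and satisfies $L(1)=1$.

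\textbf{Representation by a measure.} The Riesz--Markov representation theorem on the compact Hausdorff space $X$ then gives a (regular) Borel measure $\mu$ with $L(f)=\int f\,d\mu$ for all $f\in C(X)$. Its mass is $\mu(X)=L(1)=1$, so $\mu$ is a Borel probability measure. This is exactly the asserted convergence $\int f\,d\mu_{n_j}\to\int f\,d\mu$.

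The only nonelementary inputs are Stone--Weierstrass, used for separability, and Riesz representation, which produces a genuine countably additive measure. I expect the representation step to be the main obstacle; I would cite it rather than reprove it, in keeping with how the appendix cites~\cite{BHV} for the other standard facts.
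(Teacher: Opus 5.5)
Your proposal is correct and follows the paper's argument step by step: a countable uniformly dense family in $C(X)$, a diagonal subsequence, extension of the limit functional to all continuous $f$ using $|\int f\,d\mu_n|\le\norm{f}_\infty$, and Riesz--Markov to represent the resulting positive functional with $L(1)=1$ as a Borel probability. The only addition is your Stone--Weierstrass proof that $C(X)$ is separable, which the paper states without proof.
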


Here is the standard compactness argument. The continuous functions
on a compact metric space have a countable family dense for the
uniform norm. A diagonal subsequence makes the integrals converge
on that family; the bound $|\int f\,d\mu_n|\leq\norm{f}_\infty$
extends the limit to every continuous $f$. The resulting linear
functional $L$ is positive and satisfies $L(1)=1$. The
Riesz--Markov representation theorem represents it by a unique
countably additive Borel probability measure. This also explains why
continuous test functions determine that measure. The main proof
applies this theorem to the compact metric coloring space and uses
its finite-average estimate to establish invariance of the limit.

\subsection{Mean-square geometry and conditional expectation}\label{app:conditional-expectation}

An observation $v$ of the random coloring is a function
$v:\Omega\to\C$. Its mean-square size is
$\norm{v}_2^2=\int_\Omega|v|^2\,d\mu$. The space $L^2(\mu)$ consists
of observations with finite mean-square size, identifying functions
that agree almost surely. It is a complete inner-product space with
\[
 \ip{v}{w}=\int_\Omega\overline v\,w\,d\mu.
\]
The first argument is conjugate-linear and the second is linear.
The quantity $\norm{v-w}_2^2$ is expected squared error; for two event
indicators it equals the probability that the indicators disagree.

If $M$ is a closed linear subspace of a Hilbert space, the projection
theorem gives a unique decomposition
\[
 v=Pv+(v-Pv),\qquad Pv\in M,\quad v-Pv\perp M.
\]
The vector $Pv$ minimizes $\norm{v-w}$ over $w\in M$. The operator
$P$ is linear, fixes $M$, satisfies $P^2=P=P^*$, and obeys
$\norm{Pv}\leq\norm{v}$. The adjoint notation uses the convention
$\ip{w}{Lv}=\ip{L^*w}{v}$. A projection need not preserve
nonnegative functions; the projections in this paper have that
additional property because they are conditional expectations.

A sub-$\sigma$-algebra $\mathcal G$ describes the event information
that is retained. If the retained information reports the cell of a
finite measurable partition $E_1,\ldots,E_r$, conditional expectation
replaces $v$ on each positive-probability cell $E_j$ by its average
$\mu(E_j)^{-1}\int_{E_j}v\,d\mu$. The following theorem extends that
operation to any collection of retained events.

\begin{theorem}[Conditional expectation]\label{thm:conditional-expectation}
Let $(\Omega,\mathcal F,\mu)$ be a probability space and
$\mathcal G\subseteq\mathcal F$ a sub-$\sigma$-algebra. For every
integrable $v$ there is a $\mathcal G$-measurable integrable function
$\mathbb E[v\mid\mathcal G]$, unique almost everywhere, such that
\[
 \int_B\mathbb E[v\mid\mathcal G]\,d\mu=\int_Bv\,d\mu
 \qquad(B\in\mathcal G).
\]
The operation is linear, preserves nonnegativity and the constant
$1$, and preserves expectations. For $v\in L^2(\mu)$ it is the
orthogonal projection onto the closed subspace $L^2(\mathcal G)$.
\end{theorem}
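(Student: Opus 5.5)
Existence and uniqueness come from the Radon--Nikodym theorem; the remaining properties follow from the defining identity on sets $B\in\mathcal G$.

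\emph{Construction.} First treat nonnegative integrable $v$. The set function $\nu(B)=\int_Bv\,d\mu$, for $B\in\mathcal G$, is a finite measure on $(\Omega,\mathcal G)$. It is absolutely continuous with respect to the restriction $\mu|_{\mathcal G}$. Radon--Nikodym then gives a $\mathcal G$-measurable density $w\ge0$, and we set $\mathbb E[v\mid\mathcal G]=w$. For general complex $v$, decompose it into real and imaginary parts and each of those into positive and negative parts, and define the expectation by linearity.

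\emph{Uniqueness.} Suppose two $\mathcal G$-measurable integrable functions have the same integrals over every $B\in\mathcal G$. Test their difference on the sets where its real part, and separately its imaginary part, is positive or negative. These sets lie in $\mathcal G$, so each such integral vanishes, and the difference is zero almost everywhere.

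\emph{Basic properties.} Each property follows by checking the defining identity and invoking uniqueness.
\begin{itemize}
\item Linearity: a linear combination of conditional expectations satisfies the identity for the same linear combination of the $v$'s.
\item The constant $1$ is $\mathcal G$-measurable and satisfies the identity, so $\mathbb E[1\mid\mathcal G]=1$.
\item Nonnegativity: if $v\ge0$, apply the identity on $B=\{\mathbb E[v\mid\mathcal G]<0\}$. The left side is negative unless $\mu(B)=0$, while the right side is nonnegative.
\item Preservation of expectations is the case $B=\Omega$.
\end{itemize}

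\emph{The $L^2$ statement.} This is the step needing the most care. First show that $L^2(\mathcal G)$, the classes of $\mathcal G$-measurable square-integrable functions, is a closed subspace of $L^2(\mu)$. An $L^2$-convergent sequence has an almost everywhere convergent subsequence, so the limit is $\mathcal G$-measurable after modification on a null set. Completion of $\mathcal G$, or identification of functions modulo null sets, handles this. Let $P$ be the orthogonal projection onto $L^2(\mathcal G)$.

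\emph{$P$ satisfies the defining identity.} For $B\in\mathcal G$, the indicator $\one_B$ lies in $L^2(\mathcal G)$. Since $v-Pv\perp\one_B$, we get $\int_B(v-Pv)\,d\mu=0$. Also $Pv$ is integrable, because $\mu$ is a probability measure. By uniqueness, $Pv=\mathbb E[v\mid\mathcal G]$.

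Alternatively, one can start from this projection to construct the conditional expectation for $L^2$ functions. Positivity, proved directly, makes the map an $L^1$ contraction, and one then extends by density to all integrable $v$. This bypasses Radon--Nikodym if preferred.
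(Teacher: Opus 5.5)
Your proposal is correct, and it runs the $L^2$ identification in the opposite direction from the paper. The paper does not prove existence or uniqueness; it cites the theorem as standard background. Its only argument is a sketch of the $L^2$ clause that goes from conditional expectation to projection: the defining identity makes $v-\mathbb E[v\mid\mathcal G]$ orthogonal to indicators of $\mathcal G$-events, then to simple $\mathcal G$-measurable functions, and then, by approximation, to all of $L^2(\mathcal G)$.

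You start instead from the orthogonal projection $P$ onto $L^2(\mathcal G)$. You test $v-Pv$ only against the indicators $\one_B$ to get the defining identity. Then the uniqueness clause, which you prove along with existence via Radon--Nikodym, identifies $Pv$ with $\mathbb E[v\mid\mathcal G]$. This removes the approximation step. It also means you never need to know in advance that $\mathbb E[v\mid\mathcal G]$ is square-integrable. The paper's sketch tacitly uses that fact twice: to pass from simple functions to all of $L^2(\mathcal G)$, and to place $\mathbb E[v\mid\mathcal G]$ in that subspace. In exchange, the paper's direction exhibits the least-squares property of the already-defined averaging operator, which is the conceptual link it wants to stress.

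One small point: your nonnegativity test set $\{\mathbb E[v\mid\mathcal G]<0\}$ presupposes a real-valued version. For $v\ge0$ this is immediate anyway, since the Radon--Nikodym density you constructed is already nonnegative.
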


For complex observations the definition applies to real and imaginary
parts. The integral identities make the error perpendicular first to
indicators of retained events, then to their finite linear
combinations, and finally, by approximation, to all of
$L^2(\mathcal G)$. This is the link between conditional averaging and
the least mean-square error property. Identifying which event
information is retained by the continuous-translation subspace is a
separate argument, supplied in Appendix~B.

\subsection{Jointly Borel representatives and product integration}\label{app:representatives}

An $L^2$ vector records a function only up to a probability-null set.
To evaluate a continuously varying family $u_x$ at a chosen
$\omega$, we need representatives whose dependence on both variables
is measurable. ``Jointly Borel'' means Borel on
$\R^3\times\Omega$; in particular, fixing either variable gives a
Borel function of the other. We prove Lemma~\ref{lem:representatives}
with its stated almost-everywhere spatial conclusion.

\begin{proof}[Proof of Lemma~\ref{lem:representatives}]
First, every completed-measurable $[0,1]$-valued function on $\Omega$
has a Borel representative. Indeed, approximate it by finite simple
functions, for example by rounding down to successive dyadic values.
Each measurable level set differs from a Borel set by a null set.
Replacing those sets gives Borel simple functions with the same
values outside the union of countably many null exceptions. Take
their limit on the Borel set where they converge, put zero elsewhere,
and truncate to $[0,1]$. This gives the required Borel representative.

Let $Q_m=[-m,m]^3$, set $Q_0=\varnothing$, and put
$W_m=Q_m\setminus Q_{m-1}$. These are disjoint bounded Borel sets
whose union is $\R^3$. Fix $m$. Norm continuity of $x\mapsto u_x$
is uniform on the compact cube $Q_m$. For each $n$, a sufficiently
fine finite Borel partition $W_m=\bigcup_{\ell=1}^{r_n}E_{n,\ell}$,
with sample points $x_{n,\ell}\in E_{n,\ell}$, therefore gives
\[
 \norm{u_x-u_{x_{n,\ell}}}_2\leq2^{-n}
 \qquad(x\in E_{n,\ell}).
\]
Empty cells can be discarded. Choose Borel $[0,1]$-valued
representatives $h_{n,\ell}$ of these finitely many sample vectors
and define
\[
 H_n(x,\omega)=\sum_{\ell=1}^{r_n}
       \one_{E_{n,\ell}}(x)h_{n,\ell}(\omega).
\]
This is jointly Borel on $W_m\times\Omega$ and takes values in
$[0,1]$. Each section satisfies
$\norm{H_n(x,\cdot)-u_x}_2\leq2^{-n}$. Consequently
\[
 \norm{H_n-H_r}_{L^2(W_m\times\Omega)}
 \leq\vol(W_m)^{1/2}(2^{-n}+2^{-r}).
\]
Thus the sequence has a limit $V$ in the product $L^2$ space.
Choose a subsequence $H_{n_j}$ such that
$\sum_j\norm{H_{n_j}-V}_{L^2(W_m\times\Omega)}^2<\infty$.
Tonelli gives
\[
 \sum_j|H_{n_j}(x,\omega)-V(x,\omega)|^2<\infty
 \quad\text{for almost every }(x,\omega),
\]
after choosing any measurable representative of $V$. In particular,
$H_{n_j}$ converges almost everywhere on the product.

The convergence set of this sequence of Borel scalar functions is
Borel. Define $H^{(m)}$ to be its limit there and zero elsewhere.
Then $H^{(m)}$ is jointly Borel and $[0,1]$-valued. Fubini shows that,
for almost every fixed $x\in W_m$, the convergence holds for
$\mu$-almost every $\omega$. Bounded convergence on the probability
space then gives
$H_{n_j}(x,\cdot)\to H^{(m)}(x,\cdot)$ in $L^2(\mu)$.
Since the same sections tend to $u_x$ in that norm,
$H^{(m)}(x,\cdot)$ represents $u_x$ for almost every $x\in W_m$.

Finally, use $H^{(m)}$ on $W_m\times\Omega$ for each $m$. A
countable combination on these disjoint Borel regions gives one
jointly Borel function $H:\R^3\times\Omega\to[0,1]$. Enlarge each
exceptional spatial null set to a Borel null set and take their
countable union $N$. For every $x\notin N$, the section $H(x,\cdot)$
represents $u_x$, as required.
\end{proof}

The product-integration statements used here and in the sampling
argument are the following.
\begin{theorem}[Tonelli and Fubini]\label{app:tonelli-fubini}
Suppose $(X,\alpha)$ and $(Y,\beta)$ are sigma-finite measure
spaces, meaning that each is a countable union of sets of finite
measure. For nonnegative product-measurable $f$, Tonelli's theorem
gives
\[
 \int_{X\times Y}f\,d(\alpha\otimes\beta)
 =\int_X\!\left(\int_Yf(x,y)\,d\beta(y)\right)d\alpha(x)
 =\int_Y\!\left(\int_Xf(x,y)\,d\alpha(x)\right)d\beta(y),
\]
allowing infinite values. If $f$ is signed or complex and
$\int|f|<\infty$, Fubini gives the same equalities, with sections
and finite integrals interpreted almost everywhere. The statements
also apply to completed products with this almost-everywhere
interpretation.
\end{theorem}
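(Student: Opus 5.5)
This is the standard Tonelli--Fubini theorem. I would cite it from a measure-theory text and only outline the argument, in the order below, since the paper uses it only as background.

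\emph{Reduction to finite measures.} First I would reduce to the case $\alpha(X),\beta(Y)<\infty$. Write $X=\bigcup X_n$ and $Y=\bigcup Y_m$ as increasing unions of finite-measure sets. Apply the finite case to $f\one_{X_n\times Y_m}$ and pass to the limit by monotone convergence, which is allowed because $f\ge0$.

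\emph{The finite case, for indicators of rectangles.} For $f=\one_{S\times T}$ both iterated integrals equal $\alpha(S)\beta(T)$, and the sections are measurable. Next, consider the class of product-measurable sets $M$ for which the sections $x\mapsto\beta(M_x)$ and $y\mapsto\alpha(M^y)$ are measurable and both iterated integrals agree. This class contains the algebra of finite disjoint unions of rectangles. It is closed under increasing unions by monotone convergence. It is closed under decreasing intersections by dominated convergence, and this is where finiteness of the measures is used. The monotone class theorem (or Dynkin's $\pi$--$\lambda$ theorem) then puts every product-measurable set in the class. The common value defines $\alpha\otimes\beta$; by the uniqueness of extension from rectangles for $\sigma$-finite measures, it is the product measure.

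\emph{From sets to functions.} Linearity extends the identity to nonnegative simple functions. Monotone convergence along an increasing sequence of simple functions gives it for all nonnegative product-measurable $f$, with infinite values allowed. For Fubini, split a complex $f$ into real and imaginary parts and those into positive and negative parts. Tonelli applied to $|f|$ shows that almost every section is integrable, so the iterated integrals are finite almost everywhere and can be subtracted.

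\emph{Completed products.} A completion-measurable function agrees with a product-Borel function outside a subset of a product-null Borel set $Z$. By Tonelli applied to $\one_Z$, almost every section of $Z$ is null. Therefore the sections of the two functions agree almost everywhere, and their integrals coincide, which gives the almost-everywhere interpretation in the statement.

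The only delicate step is the monotone class argument in the finite-measure case, specifically closure under decreasing intersections. I would handle it exactly as described: first with finite measures, then by the $\sigma$-finite exhaustion.
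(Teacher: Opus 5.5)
Your outline is the standard textbook proof and is correct. The paper likewise gives no proof and simply recalls the theorem as background, so your plan to cite it matches the paper's approach. One small point in the completed-product step: concluding that a section of the completion-measurable function is itself measurable, with the same integral as the corresponding section of its modification, uses completeness of the factor measure (or integration against its completion). This is exactly the almost-everywhere interpretation the statement allows, and it holds in the paper's applications with Lebesgue measure and the completed $\mu$.
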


\section{Continuous translation observations and their frequencies}\label{app:frequencies}

The continuous observations have two descriptions in the main proof. Lemma~\ref{lem:factor} describes the information they retain and shows that projection onto them preserves nonnegativity. Lemma~\ref{lem:frequencies} identifies the same subspace by its frequencies. After specifying the frequency spaces and standard theorems, we prove these two assertions together.

\subsection{The character space and the standard theorems}\label{app:dual-background}

Let $G$ be a countable dense additive subgroup of $\R^r$. Its character space is
\[
 D_G=\widehat{G_{\mathrm{disc}}}
 =\{d:G\to\T:d(g+h)=d(g)d(h)\},
 \qquad \T=\{z\in\C:|z|=1\}.
\]
The subscript $\mathrm{disc}$ records that no Euclidean continuity is imposed on these homomorphisms. A point of $D_G$ is a function on the spatial displacements in $G$. Multiplication of two such points means pointwise multiplication of their values.

The topology on $D_G$ records convergence at every fixed displacement. Here is a concrete metric and the compactness argument. Enumerate $G=\{g_1,g_2,\ldots\}$ and put
\begin{equation}\label{eq:dual-metric}
 \rho(d,e)=\sum_{n=1}^{\infty}2^{-n}|d(g_n)-e(g_n)|.
\end{equation}
Every summand before its weight is at most $2$. Consequently the tail of the series is uniformly small, and convergence at its finitely many initial coordinates implies convergence in $\rho$. Conversely, convergence in $\rho$ forces convergence at each fixed coordinate. The full space $\T^G$ is compact: from any sequence of its points, successively choose subsequences converging at $g_1,g_2,\ldots$ and take a diagonal subsequence. The equations $d(0)=1$ and $d(g+h)=d(g)d(h)$ are closed conditions, so their solution set $D_G$ is closed and hence compact. This makes $D_G$ a compact metric group.

Ordinary Euclidean frequencies give the characters
\[
 j(\xi)(g)=e^{i\xi\cdot g},\qquad \xi\in\R^r.
\]
They are exactly the characters that are continuous for Euclidean distance on $G$. Indeed, for a character $d$,
\[
 |d(g)-d(h)|=|d(g-h)-1|.
\]
Continuity at zero therefore gives uniform continuity. Approximating each real vector by elements of $G$ extends $d$ uniquely to a continuous character on $\R^r$. The usual classification of continuous characters of $\R^r$ gives $x\mapsto e^{i\xi\cdot x}$ for a unique $\xi$. Define
\[
 C_G=j(\R^r).
\]
The map $j$ is continuous because each of its coordinates is continuous, and density of $G$ makes it injective. For every integer $N\geq1$, the image of $\{\xi:|\xi|\leq N\}$ is compact and thus closed in $D_G$. Hence
\[
 C_G=\bigcup_{N\geq1}j(\{\xi:|\xi|\leq N\})
\]
is Borel. This is the measurability needed to speak of spectral mass on $C_G$. We use the parametrization and its Borel image; we do not require its inverse to identify the subspace topology with the Euclidean topology of frequencies. A measure is \emph{carried by} $C_G$ when its complement has measure zero.

\begin{theorem}[Haar probability]\label{app:haar-probability}
Every compact Hausdorff group $L$ has a unique regular Borel probability measure $m_L$ invariant under left translation. For compact groups it is also right invariant. Thus
\[
 m_L(aB)=m_L(B)=m_L(Ba),\qquad m_L(L)=1
\]
for every Borel set $B\subseteq L$ and every $a\in L$.
\end{theorem}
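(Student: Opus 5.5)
The statement to be proved is the standard existence and uniqueness theorem for Haar probability on a compact Hausdorff group $L$. In the paper we only need it for compact metrizable abelian groups such as $D$, $D_v$ and $D\times D_v$, but I would argue in general. The plan is to build an invariant positive functional on $C(L)$ and then apply the Riesz--Markov representation theorem, which the paper already used in Appendix~\ref{app:probability}. The functional comes from uniform averaging, in the spirit of the F\o lner construction of Section~\ref{sec:model}.

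\textbf{Existence.} Fix $f\in C(L)$. By compactness, $f$ is uniformly continuous with respect to the unique uniform structure on $L$. For a finite multiset $S\subset L$, write $f_S(x)=|S|^{-1}\sum_{s\in S}f(sx)$. Let $\mathcal K_f$ be the closure, in the sup norm, of the convex set of all such averages. Uniform continuity makes the family of left translates equicontinuous and bounded. Arzel\`a--Ascoli then makes $\mathcal K_f$ compact.

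The key step is to show that $\mathcal K_f$ contains a constant function. Use the oscillation $\operatorname{osc}(g)=\max g-\min g$, which is continuous on $\mathcal K_f$ and attains its minimum at some $g$. If $g$ were not constant, the open set $\{g<\max g\}$ is nonempty, so finitely many translates of it cover $L$. Averaging $g$ over those translates produces an element of $\mathcal K_f$ with strictly smaller maximum and no smaller minimum. This contradicts minimality.

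Right averages $x\mapsto|S|^{-1}\sum f(xs)$ give a constant in the same way. A convolution-style averaging of a left average against a right average then shows that the left and right constants agree. From this, the constant $c(f)$ in $\mathcal K_f$ is unique.

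Next define $m(f)=c(f)$. It is positive, $m(1)=1$, and it is invariant under left and right translation because $\mathcal K_{f(a\cdot)}=\mathcal K_f$. Linearity requires showing $c(f+h)=c(f)+c(h)$. For this I would first take a left average bringing $f$ near $c(f)$, then apply a further average to the resulting translate of $h$. Since averages commute with adding constants up to an $\varepsilon$ error, the combined average brings $f+h$ near $c(f)+c(h)$.

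\textbf{Measure and uniqueness.} Riesz--Markov turns $m$ into a regular Borel probability $m_L$. Invariance of $m$ on continuous functions gives invariance of the measure by uniqueness of the representing measure; for $B$ Borel, $m_L(aB)$ is the measure represented by $f\mapsto m(f(a^{-1}\cdot))$. For uniqueness, let $\nu$ be any regular left-invariant probability. Integrating $f$ in both variables and using Fubini for continuous functions on $L\times L$ gives
\[
\iint f(xy)\,d\nu(x)\,dm_L(y)=\int f\,dm_L=\int f\,d\nu,
\]
where the first equality uses right invariance of $m_L$ and the second uses left invariance of $\nu$. Thus $\nu=m_L$ on $C(L)$, and hence as measures. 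This also gives right invariance of $\nu$.

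\textbf{Where the work is.} I expect the main obstacle to be linearity, that is, the consistency of the constants. In the cases the paper uses, the groups are abelian and metrizable, so it suffices to note that left and right averages coincide, which removes the left--right comparison entirely. Alternatively, one can use Markov--Kakutani applied to the commuting affine maps $\lambda\mapsto\lambda(a\,\cdot)$ on the weak-$*$ compact convex set of probabilities. This yields an invariant probability immediately, and uniqueness follows as above. That is the route I would actually write for $D$, $D_v$ and $D\times D_v$.
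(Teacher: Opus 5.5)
Your argument is correct in substance, but it does something the paper never does: the paper gives no proof of this statement. It quotes existence and uniqueness of Haar probability as standard background in Appendix~\ref{app:dual-background}. The only consequence it proves itself is the cancellation identity~\eqref{eq:haar}, which uses nothing beyond invariance under multiplication by one fixed character.

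You instead supply a self-contained proof, via von Neumann's averaging construction:
\begin{itemize}
\item Arzel\`a--Ascoli compactness of $\mathcal K_f$;
\item the oscillation-minimizing element;
\item the left/right convolution comparison, which makes the constant unique;
\item the two-step average for additivity (where $\mathcal K_{Ah}\subseteq\mathcal K_h$ and uniqueness give $c(Ah)=c(h)$);
\end{itemize}
followed by Riesz--Markov and a Fubini uniqueness argument. This gains self-containedness at the cost of length.

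For the groups the paper actually uses ($D$, $D_v$, $D\times D_v$, all compact, metrizable and abelian), your Markov--Kakutani alternative is the shorter route. An even quicker version stays within the paper's own toolkit. The paper already invokes amenability of abelian groups~\cite[Theorem~G.2.1]{BHV} and Riesz--Markov. An invariant mean on $D$, viewed as a discrete group and restricted to $C(D)$, is already a positive, normalized, translation-invariant functional, so Riesz--Markov produces $m_D$ directly.

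Two small points need repair.

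\emph{Uniqueness display.} The measures sit on the wrong variables. With $x$ distributed by $\nu$ and $y$ by $m_L$, integrating out $x$ first requires \emph{right} invariance of $\nu$, which is not assumed. Integrating out $y$ first uses left, not right, invariance of $m_L$. Write instead
\[
 \iint f(xy)\,dm_L(x)\,d\nu(y).
\]
Here the $x$-integral equals $\int f\,dm_L$ by right invariance of $m_L$, and the $y$-integral equals $\int f\,d\nu$ by left invariance of $\nu$. These are exactly the invariances you cite.

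\emph{Right invariance of $c$.} The identity $\mathcal K_{f(a\cdot)}=\mathcal K_f$ gives only left invariance of $c$. Right invariance follows from your right-average characterization of the same constant, since right averages of $f(\cdot\,a)$ are right averages of $f$. Alternatively, note that left averaging commutes with right translation, so the left-average set of $f(\cdot\,a)$ is a right translate of $\mathcal K_f$ and contains the same constant.

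In the abelian setting the paper needs, both issues disappear.
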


Here regularity means that Borel sets can be approximated in measure from within by compact sets and from outside by open sets. Haar probability is the group version of uniform probability: it gives equal masses on a finite group and uniform angle on $\T$. On $D_G$, translation means multiplying by a fixed character. Its cancellation property is proved at its use in the main text.

For a finite Borel measure $\lambda$ on $D_G$, the numbers
\[
 \widehat\lambda(g)=\int_{D_G}d(g)\,d\lambda(d),\qquad g\in G,
\]
are its Fourier coefficients. The displacement $g$ is fixed while the character $d$ varies. In the proof, these coefficients will be scalar products of observations and their translates.

\begin{theorem}[Spectral theorem for commuting translations]\label{app:spectral-theorem}
Let $g\mapsto T_g$ be a unitary representation of a countable discrete abelian group $G$ on a complex Hilbert space $H$. There is a unique projection-valued Borel measure $\Pi$ on $D_G$ such that
\[
 T_g=\int_{D_G}d(g)\,d\Pi(d).
\]
Precisely, each $\Pi(S)$ is an orthogonal projection,
\[
 \Pi(D_G)=\Id,\qquad
 \Pi(S)\Pi(B)=\Pi(S\cap B),
\]
and for disjoint Borel sets $S_n$,
\[
 \Pi\Bigl(\bigcup_n S_n\Bigr)v=\sum_n\Pi(S_n)v
 \quad(v\in H),
\]
where the sum converges in Hilbert norm. For every $v\in H$, the finite positive Borel measure
\[
 \lambda_v(S)=\ip{v}{\Pi(S)v}=\norm{\Pi(S)v}^2
\]
satisfies
\[
 \ip{v}{T_gv}=\int_{D_G}d(g)\,d\lambda_v(d),
 \qquad \lambda_v(D_G)=\norm{v}^2.
\]
It is uniquely determined by these Fourier coefficients.
\end{theorem}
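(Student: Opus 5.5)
The plan is to build $\Pi$ from the commutative $C^*$-algebra generated by the translations, and to read off the scalar measures by Riesz--Markov. Let $\mathcal A\subset B(H)$ be the norm closure of the finite sums $\sum_g c_gT_g$. Since $T_g^*=T_{-g}$ and the $T_g$ commute, $\mathcal A$ is a unital commutative $C^*$-algebra. By Gelfand theory, $\mathcal A\cong C(\Sigma)$, where $\Sigma$ is its compact space of characters. A character $\phi$ of $\mathcal A$ sends each unitary $T_g$ into $\T$ multiplicatively, so $\iota(\phi)=(g\mapsto\phi(T_g))$ lies in $D_G$. The map $\iota:\Sigma\to D_G$ is continuous, because each coordinate is. It is injective, because the $T_g$ generate $\mathcal A$. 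So $\iota$ is a homeomorphism onto a closed subset of $D_G$. Composing the Gelfand isomorphism with restriction along $\iota$ gives a unital $*$-homomorphism $\Phi:C(D_G)\to\mathcal A$ with $\Phi(\hat g)=T_g$, where $\hat g(d)=d(g)$, and with $\norm{\Phi(f)}\le\norm{f}_\infty$. This step replaces the naive approach of defining a functional on trigonometric polynomials and proving positivity by hand. I expect that naive approach to be the main obstacle: Herglotz-type positivity only controls functions of the form $|p|^2$, and Gelfand theory supplies the needed norm bound for free.

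Next I will obtain the scalar measures. For $v,w\in H$, the map $f\mapsto\ip{w}{\Phi(f)v}$ is a bounded linear functional on $C(D_G)$ of norm at most $\norm{w}\norm{v}$. By Riesz--Markov it is given by a unique regular complex Borel measure $\lambda_{w,v}$. When $w=v$ and $f\ge0$, write $f=h^2$ with $h\ge0$ continuous; then $\ip{v}{\Phi(f)v}=\norm{\Phi(h)v}^2\ge0$, so $\lambda_v=\lambda_{v,v}$ is positive. Evaluating at $f=\hat g$ and at $f=1$ gives the two displayed identities, $\ip{v}{T_gv}=\int_{D_G}d(g)\,d\lambda_v(d)$ and $\lambda_v(D_G)=\norm v^2$.

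For each bounded Borel function $f$, the form $(w,v)\mapsto\int f\,d\lambda_{w,v}$ is bounded and sesquilinear, hence equals $\ip{w}{\Phi(f)v}$ for a unique bounded operator $\Phi(f)$; put $\Pi(S)=\Phi(\one_S)$. The identities $\Phi(fh)=\Phi(f)\Phi(h)$ and $\Phi(\bar f)=\Phi(f)^*$ hold on $C(D_G)$. I will extend them to bounded Borel functions by a monotone-class argument: first fix one continuous factor and pass to bounded pointwise limits in the other using dominated convergence in each $\lambda_{w,v}$, then repeat with the roles exchanged. This yields that each $\Pi(S)$ is an orthogonal projection with $\Pi(S)\Pi(B)=\Pi(S\cap B)$ and $\Pi(D_G)=\Id$. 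It also yields $\lambda_v(S)=\ip{v}{\Pi(S)v}=\norm{\Pi(S)v}^2$. For disjoint $S_n$, the projections $\Pi(S_n)$ are mutually orthogonal, and $\norm{\Pi(\bigcup_n S_n)v-\sum_{n\le N}\Pi(S_n)v}^2=\lambda_v(\bigcup_{n>N}S_n)\to0$ by countable additivity of $\lambda_v$, which gives the norm-convergent sum.

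For uniqueness, observe that the functions $\hat g$ form a unital $*$-subalgebra of $C(D_G)$. They separate points, since distinct points of $D_G$ are distinct functions on $G$. By Stone--Weierstrass their span is dense in $C(D_G)$. Hence a finite measure is determined by its Fourier coefficients. Any other projection-valued measure $\Pi'$ with $T_g=\int d(g)\,d\Pi'$ therefore gives the same scalar measures $\ip{v}{\Pi'(\cdot)v}$, and so $\Pi'=\Pi$ by polarization.
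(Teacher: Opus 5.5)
Your proof is correct. The paper, however, does not prove this statement at all: it imports it as the Stone--Naimark--Ambrose--Godement theorem \cite[Theorem~D.3.1]{BHV}, which is stated there for arbitrary locally compact abelian groups, and specializes it to the discrete group $G$, whose dual $D_G$ is compact.

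What you give is a self-contained proof of that special case, in four steps.
\begin{itemize}
\item Gelfand theory for the commutative $C^*$-algebra generated by the unitaries $T_g$ yields $\Phi:C(D_G)\to B(H)$ with $\Phi(\hat g)=T_g$. Its bound $\norm{\Phi(f)}\le\norm{f}_\infty$ settles positivity without any hand-made Herglotz argument.
\item Riesz--Markov produces the scalar measures $\lambda_{w,v}$.
\item A bounded monotone-class passage extends multiplicativity and adjoints to bounded Borel functions, which produces $\Pi$.
\item Stone--Weierstrass on the span of the evaluations $\hat g$ gives both uniqueness claims. This is the same density argument the paper itself uses in Appendix~\ref{app:continuous-proof} to identify $\widetilde\nu$ with $\lambda_v$.
\end{itemize}

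Two features of the specialization make your route clean. Discreteness lets you work with the $T_g$ directly, so there is no $L^1(G)$ integration, approximate unit, or nondegeneracy issue. Countability of $G$ makes $D_G$ compact metrizable, so Riesz--Markov measures are automatically regular and Baire sets coincide with Borel sets. That coincidence is exactly what your monotone-class step needs to reach every Borel set $S$.

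The citation buys brevity and full generality. Your argument buys a proof from standard tools only: Gelfand--Naimark, Riesz--Markov, Stone--Weierstrass, and the functional monotone class theorem.

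One small remark: injectivity of $\iota:\Sigma\to D_G$ is never used. Continuity alone suffices to pull back $f\in C(D_G)$ to $f\circ\iota\in C(\Sigma)$.
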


This is the form of the spectral theorem in~\cite[Theorem~D.3.1]{BHV}. A projection-valued measure assigns to a frequency region $S$ the operator extracting the component in that region; its squared length is $\lambda_v(S)$. The formula allows frequencies to be distributed continuously and does not assert the existence of a basis of eigenvectors. The inner product convention throughout is conjugate-linear in the first argument and linear in the second.

The converse frequency argument uses the following theorem on ordinary Euclidean space. A function $c:\R^r\to\C$ is called \emph{positive definite} if
\[
 \sum_{j,\ell=1}^{N}\overline{\alpha_j}\alpha_\ell
       c(x_\ell-x_j)\geq0
\]
for every finite choice of points $x_j\in\R^r$ and coefficients $\alpha_j\in\C$. Equivalently, all these finite matrices are positive semidefinite.

\begin{theorem}[Bochner's theorem]\label{app:bochner}
A continuous function $c:\R^r\to\C$ is positive definite if and only if there is a finite positive Borel measure $\nu$ on $\R^r$ such that
\[
 c(x)=\int_{\R^r}e^{i\xi\cdot x}\,d\nu(\xi).
\]
The measure is unique and has total mass $c(0)$.
\end{theorem}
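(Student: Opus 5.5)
This is the classical Bochner theorem, so I would prove the easy direction directly and obtain the hard direction by a Fourier--Plancherel positivity argument, with weak compactness to pass from approximations to $c$ itself.

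\emph{Easy direction.} Suppose $c(x)=\int e^{i\xi\cdot x}\,d\nu(\xi)$. Then $c$ is continuous by dominated convergence, since $\nu$ is finite. Positive definiteness follows from
\[
 \sum_{j,\ell}\overline{\alpha_j}\alpha_\ell c(x_\ell-x_j)=\int\Bigl|\sum_\ell\alpha_\ell e^{i\xi\cdot x_\ell}\Bigr|^2d\nu(\xi)\ge0 .
\]
Uniqueness: two finite measures with the same Fourier transform agree on every Schwarz function $\widehat\psi$. These are dense in $C_0(\R^r)$, so the measures coincide by Riesz representation. The total mass is $c(0)$ by evaluating at $x=0$.

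\emph{Hard direction, positivity of transforms.} Let $c$ be continuous and positive definite. Taking $N=1$ gives $c(0)\ge0$. The $2\times2$ case gives $c(-x)=\overline{c(x)}$ and $|c(x)|\le c(0)$, so $c$ is bounded. Approximating integrals by Riemann sums in the finite-sum definition gives
\[
 \iint\overline{\phi(x)}\phi(y)c(y-x)\,dx\,dy\ge0\qquad(\phi\in C_c(\R^r)).
\]
Now damp $c$ by a Gaussian: put $c_\varepsilon(x)=c(x)e^{-\varepsilon|x|^2}$. This is still positive definite, since a product of positive definite kernels is positive definite by the Schur product theorem, and the Gaussian is positive definite because it is the transform of a Gaussian. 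It is also integrable. Its Fourier transform $\widehat{c_\varepsilon}(\xi)=(2\pi)^{-r}\int c_\varepsilon(x)e^{-i\xi\cdot x}\,dx$ is continuous. To show $\widehat{c_\varepsilon}\ge0$, insert $\phi(x)=e^{i\xi\cdot x}e^{-\delta|x|^2}$ into the integral inequality, change variables to $u=y-x$, and let $\delta\to0$. \textbf{This is the step I expect to be the main technical obstacle}: the positivity must survive the limit, which needs dominated convergence after normalizing by the volume factor $\delta^{-r/2}$.

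\emph{Hard direction, limit measures.} Once $\widehat{c_\varepsilon}\ge0$, apply Fourier inversion along a Gaussian summability kernel. Using continuity of $c_\varepsilon$ at $0$ and monotone convergence, this shows $\int\widehat{c_\varepsilon}=c_\varepsilon(0)=c(0)$ and $c_\varepsilon(x)=\int e^{i\xi\cdot x}\widehat{c_\varepsilon}(\xi)\,d\xi$. So $\nu_\varepsilon=\widehat{c_\varepsilon}\,d\xi$ are positive measures of mass $c(0)$. As $\varepsilon\to0$, $c_\varepsilon\to c$ pointwise. A weak-$*$ limit point $\nu$ exists in the compactification of $\R^r$, and tightness is needed to keep mass from escaping to infinity. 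For tightness, use the standard estimate
\[
 \nu_\varepsilon\{|\xi|>R\}\lesssim R^r\int_{|x|\le 1/R}\bigl(c(0)-\operatorname{Re}c_\varepsilon(x)\bigr)\,dx .
\]
Continuity of $c$ at $0$ makes this small uniformly in small $\varepsilon$. Hence $\nu$ is a finite measure on $\R^r$, and $c(x)=\int e^{i\xi\cdot x}\,d\nu$ follows from weak convergence tested on the bounded continuous functions $e^{i\xi\cdot x}$.
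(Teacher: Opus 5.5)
Your proof is correct. It is the classical Euclidean argument: Gaussian damping, positivity of $\widehat{c_\varepsilon}$ from the integrated form of positive definiteness, Fourier inversion with monotone convergence to get $\int\widehat{c_\varepsilon}=c(0)$, then tightness and a weak limit.

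The paper gives no proof of its own. It quotes the theorem from \cite[Theorem~D.2.2]{BHV}, where Bochner's theorem is established for locally compact abelian groups. The citation buys generality and brevity, and it comes from the same source the paper uses for the spectral theorem on the discrete dual $D_G$. Your route is self-contained and uses only Fourier analysis on $\R^r$. That is all the paper needs, since Bochner is applied only to the continuous positive definite function $x\mapsto\ip{v}{T_xv}$ on $\R^r$.

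Two small points in your write-up.

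\emph{Extending the test class.} The test function $e^{i\xi\cdot x}e^{-\delta|x|^2}$ is not in $C_c(\R^r)$, so you should first extend the integral inequality to $\phi\in L^1(\R^r)$. This follows by density, because $|c|\le c(0)$ gives $\bigl|\iint\overline{\phi(x)}\psi(y)c(y-x)\,dx\,dy\bigr|\le c(0)\norm{\phi}_1\norm{\psi}_1$.

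\emph{The flagged obstacle.} The step you expect to be hardest can be avoided. Insert $\phi(x)=e^{-i\xi\cdot x}e^{-\delta|x|^2}$ into the inequality for the undamped $c$. Since $\int e^{-\delta(|x|^2+|x+u|^2)}\,dx=(\pi/(2\delta))^{r/2}e^{-\delta|u|^2/2}$, the double integral is a positive multiple of $\widehat{c_{\delta/2}}(\xi)$. Positivity of every $\widehat{c_\varepsilon}$ then holds exactly, with no Schur product theorem and no limit in $\delta$.
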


See~\cite[Theorem~D.2.2]{BHV}. Its role below is to turn a continuous translation correlation into a measure of ordinary Euclidean frequencies.

\subsection{One proof of the two continuous-observation lemmas}\label{app:continuous-proof}

We prove Lemmas~\ref{lem:factor} and~\ref{lem:frequencies} in their common setting. Let $\Omega$ be compact metric, let $\mu$ be a Borel probability completed by its null sets, and let a countable dense subgroup $G\subset\R^r$ act by measure-preserving transformations. Write $\mathcal F$ for the completed event collection and $H=L^2(\Omega,\mathcal F,\mu)$. The operators $T_g$ act by composing an observation with the appropriate transformation of $\Omega$. Besides being unitary, they preserve products, complex conjugation, constants and nonnegative functions. Set
\[
 H_c=\{v\in H:\norm{T_gv-v}_2\longrightarrow0
                 \text{ as }g\in G,\ g\to0\text{ in }\R^r\}.
\]
The limit concerns the expected squared change of an observation. No pointwise continuity of a coloring is assumed.

\begin{proof}[Proof of Lemmas~\ref{lem:factor} and~\ref{lem:frequencies}]
First, $H_c$ is a closed linear subspace. Linearity follows from the triangle inequality. If $v_n\in H_c$ and $v_n\to v$ in $L^2$, then
\[
 \norm{T_gv-v}_2
 \leq 2\norm{v-v_n}_2+\norm{T_gv_n-v_n}_2.
\]
Choose $n$ first and then make $g$ small. This proves that $v\in H_c$.

Its bounded elements also form an algebra. Here bounded means bounded outside a probability-null set, and $\norm{v}_\infty$ is the least such bound. Constants and complex conjugates belong to $H_c$. For bounded $v,w\in H_c$, preservation of products and of these bounds gives
\[
 \begin{split}
 \norm{T_g(vw)-vw}_2
 &\leq \norm{v}_\infty\norm{T_gw-w}_2
       +\norm{w}_\infty\norm{T_gv-v}_2
       \longrightarrow0.
 \end{split}
\]

Bounded elements suffice to recover the whole subspace. For $M>0$ let
\[
 \theta_M(z)=
 \begin{cases}
 z,&|z|\leq M,\\
 Mz/|z|,&|z|>M.
 \end{cases}
\]
This clips a complex value to the disk of radius $M$. As the nearest-point map onto a closed convex disk, it is $1$-Lipschitz. Composition with $\theta_M$ commutes with $T_g$, and consequently
\[
 \norm{T_g\theta_M(v)-\theta_M(v)}_2
 \leq\norm{T_gv-v}_2.
\]
Thus $v\in H_c$ implies $\theta_M(v)\in H_c$. Moreover,
\[
 |v-\theta_M(v)|^2
 \leq |v|^2\one_{\{|v|>M\}},
\]
so dominated convergence gives $\theta_M(v)\to v$ in $L^2$.

We now identify the event information represented by this subspace. Let $\mathcal A$ be the collection of all everywhere bounded, $\mathcal F$-measurable real functions whose $L^2$ classes lie in $H_c$. Including every bounded measurable representative of each class makes $\mathcal A$ a pointwise algebra, rather than merely an algebra of equivalence classes. Define $\mathcal F_c$ to be the completion by ambient null sets of
\[
 \sigma(\mathcal A)
 =\sigma\bigl(\{v^{-1}(B):v\in\mathcal A,
                              \ B\subseteq\R\text{ Borel}\}\bigr).
\]
These are the events determined by the bounded continuous observations. The inclusion of null-set modifications makes this definition independent of the representatives chosen.

The needed passage from an algebra to all its measurable functions is the \emph{functional monotone class theorem}: if an algebra $\mathcal A$ of bounded real functions contains the constant $1$ and is contained in a real vector space $\mathcal M$ of bounded functions closed under pointwise increasing limits of uniformly bounded sequences, then $\mathcal M$ contains every bounded $\sigma(\mathcal A)$-measurable real function. Apply it here with $\mathcal M=\mathcal A$. If $v_n\in\mathcal A$ increase pointwise and are uniformly bounded, their limit $v$ is measurable and $v_n\to v$ in $L^2$ by dominated convergence on the probability space. Closedness of $H_c$ gives $v\in\mathcal A$. The theorem therefore shows that every bounded $\sigma(\mathcal A)$-measurable function belongs to $H_c$. Completion changes representatives only on null sets. Taking real and imaginary parts, and then bounded truncations, yields
\[
 L^2(\mathcal F_c)\subseteq H_c.
\]
Conversely, the real and imaginary parts of each $\theta_M(v)$, for $v\in H_c$, belong to $\mathcal A$ after choosing bounded representatives. Hence $\theta_M(v)$ is $\mathcal F_c$-measurable. Its $L^2$ limit is also $\mathcal F_c$-measurable up to a null set: one may take an almost-everywhere convergent subsequence and use completion. We have proved
\begin{equation}\label{eq:continuous-information}
 H_c=L^2(\mathcal F_c).
\end{equation}
By the conditional-expectation result, Theorem~\ref{thm:conditional-expectation}, the orthogonal projection $P$ onto this space is
\[
 Pv=\mathbb E[v\mid\mathcal F_c].
\]
It therefore preserves nonnegativity and the constant $1$, as asserted in Lemma~\ref{lem:factor}.

Next we extend the parameters on $H_c$. Commutativity and unitarity show that $T_gH_c=H_c$ for every $g\in G$; the reverse inclusion follows by using $-g$. For $x\in\R^r$, choose $g_n\in G$ with $g_n\to x$. If $v\in H_c$, then
\[
 \norm{T_{g_n}v-T_{g_m}v}_2
 =\norm{T_{g_n-g_m}v-v}_2\longrightarrow0.
\]
Define $T_xv$ to be the limit in the closed space $H_c$. Comparing two approximating sequences by the same estimate proves that this definition is independent of the choice. Linearity, norm preservation and the group law pass to limits. The inverse is $T_{-x}$, so every $T_x$ is unitary. Continuity at zero on $G$ passes to real parameters by density; the group law then gives continuity at every $x$. Thus the extension is \emph{strongly continuous}: for each fixed $v\in H_c$, the map $x\mapsto T_xv$ is continuous in $L^2$ norm. The nonnegative cone of $L^2$ is closed, so these limiting operators preserve nonnegativity; they also fix $1$. This constructs an action on $H_c$, without asserting a pointwise action of real translations on every $\omega\in\Omega$.

It remains to identify this same subspace by frequencies. Let $\Pi$ be the spectral measure of the original $G$-representation. The spectral theorem gives
\begin{equation}\label{eq:continuous-spectral-norm}
 \norm{T_gv-v}_2^2
 =\int_{D_G}|d(g)-1|^2\,d\lambda_v(d).
\end{equation}
If $\lambda_v(D_G\setminus C_G)=0$, the integrand tends to zero as $g\to0$ at every character carrying the measure and is bounded by $4$. Dominated convergence implies $v\in H_c$.

For the converse, take $v\in H_c$ and use the real-parameter extension just obtained. The function
\[
 c(x)=\ip{v}{T_xv},\qquad x\in\R^r,
\]
is continuous. It is positive definite because
\[
 \sum_{j,\ell}\overline{\alpha_j}\alpha_\ell c(x_\ell-x_j)
 =\norm{\sum_\ell\alpha_\ell T_{x_\ell}v}_2^2\geq0.
\]
Bochner's theorem supplies a finite positive measure $\nu$ on $\R^r$ with
\[
 c(x)=\int_{\R^r}e^{i\xi\cdot x}\,d\nu(\xi).
\]
Map this measure into $D_G$ by $j$: explicitly, set
\[
 \widetilde\nu(S)=\nu(j^{-1}(S))
 \qquad(S\subseteq D_G\text{ Borel}).
\]
This is the distribution after applying the map $j$, also called its pushforward. It gives $D_G\setminus C_G$ mass zero. For $g\in G$ its Fourier coefficients satisfy
\[
 \int_{D_G}d(g)\,d\widetilde\nu(d)
 =\int_{\R^r}e^{i\xi\cdot g}\,d\nu(\xi)
 =c(g)=\int_{D_G}d(g)\,d\lambda_v(d).
\]

For completeness, these equalities determine the measures. Finite complex linear combinations of the evaluation functions $d\mapsto d(g)$ form an algebra: multiplying two evaluations adds their arguments. The algebra contains constants, is closed under complex conjugation because $\overline{d(g)}=d(-g)$, and separates distinct points of $D_G$. The complex Stone--Weierstrass theorem therefore makes it uniformly dense in the continuous functions on $D_G$. Equality of its integrals implies equality of integrals of all continuous functions, because both measures are finite. Finite Borel measures on a compact metric space are regular and are uniquely determined by those continuous tests. Hence $\widetilde\nu=\lambda_v$, and $\lambda_v$ is carried by $C_G$.

We have proved both implications in
\[
 v\in H_c\quad\Longleftrightarrow\quad
 \lambda_v(D_G\setminus C_G)=0.
\]
The right side describes the range of $\Pi(C_G)$: its complementary projection has squared norm $\lambda_v(D_G\setminus C_G)$. Thus the two orthogonal projections onto the same subspace coincide,
\begin{equation}\label{eq:continuous-projection-identification}
 P=\Pi(C_G).
\end{equation}
In particular, $v\perp H_c$ implies
\[
 \lambda_v(C_G)=\norm{\Pi(C_G)v}_2^2=0.
\]
This proves Lemma~\ref{lem:frequencies} for precisely the subspace already identified in Lemma~\ref{lem:factor}.
\end{proof}

\subsection{When another symmetry commutes with the projection}\label{app:commutation}

Let $M$ be a closed subspace of a Hilbert space, let $P_M$ be its orthogonal projection, and let $U$ be unitary. Suppose that both $U$ and $U^{-1}$ carry $M$ into $M$. This says $UM=M$. It also forces preservation of the perpendicular subspace: if $b\in M^\perp$ and $w\in M$, then
\[
 \ip{w}{Ub}=\ip{U^{-1}w}{b}=0,
\]
because $U^{-1}w\in M$. Applying the same argument to $U^{-1}$ gives $UM^\perp=M^\perp$. Therefore $U$ preserves both parts of the decomposition $v=P_Mv+(v-P_Mv)$, and its uniqueness gives
\[
 UP_M=P_MU.
\]

For the continuous subspace $H_c$, the hypothesis is easy to verify when $U$ commutes with every $T_g$: the continuity norm for $Uv$ then equals that for $v$, and the same holds for $U^{-1}$. More generally, suppose
\[
 U^{-1}T_gU=T_{Lg},
\]
where $L$ is an invertible linear map of $\R^r$ with $L(G)=G$. Then
\[
 \norm{T_gUv-Uv}_2=\norm{T_{Lg}v-v}_2\longrightarrow0
 \qquad(v\in H_c, g\to0).
\]
Using $L^{-1}$ proves the corresponding assertion for $U^{-1}$. Thus $UP=PU$ in both cases. The main proof applies this criterion to commuting translations and to rotations of coordinates. The inverse is what ensures preservation of the perpendicular subspace as well as the continuous observations themselves.

\section{Averaging the algebraic height parameter}
\label{app:height-averaging}

We prove Lemma~\ref{lem:smoothing}. The difficulty is that the vectors
$V_tv$ are initially available only at algebraic heights $t\in F$.
Ordinary convolution on $\R$ would require values at all real heights;
continuity is what the averaging argument is meant to produce. We
instead construct an operation on functions of algebraic heights,
following the invariant integral used in~\cite[Section~2.2]{OpenAI}.
First it returns a number from a scalar function. Then inner products
determine its vector output. The last step shows why this operation
selects exactly the vectors with continuous translation orbits.

\subsection{Scalar averaging by periodization}
\label{app:height-scalar}

An invariant mean on an additive group is a linear rule for averaging
all bounded complex-valued functions on that group. The rule is
positive, takes the constant function $1$ to $1$, and gives the same
answer after every translation. Abelian discrete groups admit invariant
means; see~\cite[Theorem~G.2.1]{BHV}. Choose one, denoted by $m$, on
$F/\Z$. This quotient identifies two algebraic heights when their
difference is an integer. Thus
\[
 m(1)=1,\qquad
 m(\psi(\,\cdot+a))=m(\psi)\quad(a\in F/\Z).
\]
Positivity and normalization imply
$|m(\psi)|\leq m(|\psi|)\leq\norm{\psi}_\infty$.
No measurability condition is imposed on $\psi$.

Let $h:F\to\C$ be bounded and zero outside a bounded real interval.
Add together its values at all integer translates of a height:
\[
 \mathcal P h(t+\Z)=\sum_{n\in\Z}h(t+n).
\]
This is called periodization. Choosing a different representative of
$t+\Z$ merely reindexes the sum. If $h$ vanishes outside $[-R,R]$,
only a bounded number of integers can contribute, uniformly in $t$.
Consequently $\mathcal P h$ is a bounded function on $F/\Z$, and we may
define
\begin{equation}\label{eq:height-scalar-definition}
 I_{\C}(h)=m(\mathcal P h).
\end{equation}
This functional is linear and positive. Moreover,
$\mathcal P(h(\,\cdot+s))(t+\Z)=\mathcal P h(t+s+\Z)$, so it is
translation-invariant for every $s\in F$.

The integer sum supplies the normalization needed for local averages:
$\mathcal P\one_{[0,1)\cap F}=1$, and hence
$I_{\C}(\one_{[0,1)\cap F})=1$. A normalized invariant mean on $F$
itself would give zero to every boundedly supported function: any
bounded interval has arbitrarily many disjoint translates, so positivity
bounds its mean by $1/N$ for every $N$. The quotient construction
retains the size of a bounded interval instead of erasing it.

We next check that this new operation has the usual value on continuous
functions. A continuous function on the circle can be viewed as a
continuous $1$-periodic function on $\R$. Its basic Fourier functions are
\[
 e_n(t)=e^{2\pi int},\qquad n\in\Z.
\]
For $n\ne0$, translation by the rational height $1/(2n)\in F$ changes
$e_n$ to $-e_n$. Translation invariance therefore gives
$m(e_n|_{F/\Z})=0$. For $n=0$ the mean is $1$.
These are also the values of $\int_0^1e_n(t)\,dt$.
Linearity gives agreement for every finite linear combination of the
$e_n$. The uniform trigonometric approximation theorem states that
every continuous $1$-periodic function is a uniform limit of such
combinations. Since both $m$ and ordinary integration are bounded by
the supremum norm, it follows that
\[
 m(f|_{F/\Z})=\int_0^1 f(t)\,dt
 \quad\text{for every continuous $1$-periodic }f.
\]

If $\varphi\in C_c(\R)$, then
$\mathcal P\varphi(t)=\sum_n\varphi(t+n)$ is continuous and
$1$-periodic. Only finitely many summands are nonzero on $[0,1]$.
Thus the preceding identity and a finite interchange of sum and
ordinary integral give
\begin{equation}\label{eq:height-agreement}
 \begin{aligned}
 I_{\C}(\varphi|_F)
 &=\int_0^1\sum_{n\in\Z}\varphi(t+n)\,dt\\
 &=\sum_{n\in\Z}\int_n^{n+1}\varphi(x)\,dx
 =\int_{\R}\varphi(x)\,dx.
 \end{aligned}
\end{equation}
Restrictions to $F$ will be understood below. The constant function
$1$ on all of $F$ is outside the domain of $I_{\C}$; normalization here
means that a compactly supported continuous kernel of integral one has
$I_{\C}(\varphi)=1$.

We will also need an estimate for functions without a continuous
extension. Positivity implies
\begin{equation}\label{eq:height-absolute-bound}
 |I_{\C}(h)|\leq I_{\C}(|h|).
\end{equation}
Indeed, multiply $h$ by a complex number of modulus one which makes
its integral real and nonnegative, and compare its real part pointwise
with $|h|$. In particular, for $\varphi\geq0$ in $C_c(\R)$ and a bounded
function $h$ on $F$,
\[
 |I_{\C}(\varphi h)|
 \leq I_{\C}(\varphi)
       \sup_{t\in F\cap\supp\varphi}|h(t)|.
\]
For a kernel of integral one this is
\eqref{eq:height-support-bound}; the case $\varphi=0$ is immediate.
All these conclusions use positivity, finite sums, translation
invariance, and uniform approximation. We do not assume countable
additivity of $m$, and apply no interchange or convergence theorem for
countably additive measures to it.

\subsection{Vector averages determined by inner products}
\label{app:height-vector}

Let $W:F\to H$ be bounded in norm and vanish outside a bounded real
interval. In the coloring application, each $W(t)$ is a function of a
random coloring; averaging over $t$ must leave such a function as the
output. We determine that output by its inner product with every test
vector $w\in H$.

Recall the Hilbert-space Riesz representation theorem in the convention
used here: every bounded conjugate-linear functional $\ell$ on $H$ has
the form $\ell(w)=\ip{w}{y}$ for a unique $y\in H$, with
$\norm{y}=\sup_{\norm{w}\leq1}|\ell(w)|$.
For the present $W$, define
\[
 \ell_W(w)=I_{\C}\bigl(t\mapsto\ip{w}{W(t)}\bigr).
\]
The scalar function has bounded support, so it is in the domain of
$I_{\C}$. It is conjugate-linear in $w$ because our inner product is
linear in its second argument. By \eqref{eq:height-absolute-bound} and
Cauchy--Schwarz,
\[
 |\ell_W(w)|
 \leq\norm{w}\,I_{\C}\bigl(t\mapsto\norm{W(t)}\bigr).
\]
Riesz representation therefore gives a unique vector $I_H(W)$ with
\begin{equation}\label{eq:height-vector-definition}
 \ip{w}{I_H(W)}
   =I_{\C}\bigl(t\mapsto\ip{w}{W(t)}\bigr)
       \quad(w\in H),
 \qquad
 \norm{I_H(W)}\leq I_{\C}(\norm{W(\cdot)}).
\end{equation}
Thus the length of a vector average is at most the average of the
lengths. Equality against all test vectors also proves linearity in
$W$ and the useful identity $I_H(t\mapsto h(t)v)=I_{\C}(h)v$.

Let $L:H\to H$ be any bounded linear operator. Its adjoint is defined
by $\ip{w}{Ly}=\ip{L^*w}{y}$. Hence
\[
 \begin{aligned}
 \ip{w}{L I_H(W)}
 &=\ip{L^*w}{I_H(W)}\\
 &=I_{\C}\bigl(t\mapsto\ip{L^*w}{W(t)}\bigr)\\
 &=I_{\C}\bigl(t\mapsto\ip{w}{L W(t)}\bigr)
 =\ip{w}{I_H(LW)}.
 \end{aligned}
\]
It follows that $L I_H(W)=I_H(LW)$. Likewise, scalar translation
invariance, tested against every $w$, gives
$I_H(W(\,\cdot+s))=I_H(W)$ for $s\in F$.
These are the two rules that allow us to move a translation operator
through the average and then change the height parameter.

\subsection{Continuous outputs and the shrinking-kernel limit}
\label{app:height-limit}

First, $H_v$ is a closed linear subspace. To see closedness, if
$v_n\in H_v$ and $v_n\to v$ in norm, then
\[
 \norm{V_hv-v}
 \leq2\norm{v-v_n}+\norm{V_hv_n-v_n}.
\]
Choose $n$ first and then take $h\in F$ sufficiently close to zero.
This proves that $v\in H_v$ and justifies its orthogonal projection $Q$.
Commutativity gives
$\norm{V_hV_tw-V_tw}=\norm{V_hw-w}$, so $V_t$ preserves $H_v$.
The same holds for its inverse $V_{-t}$; consequently it also preserves
$H_v^\perp$.

For $\varphi\in C_c(\R)$ put
\[
 S_\varphi v=I_H(t\mapsto\varphi(t)V_tv).
\]
Unitarity and \eqref{eq:height-vector-definition} give
\begin{equation}\label{eq:height-operator-bound}
 \norm{S_\varphi v}
 \leq\norm{v}\,I_{\C}(|\varphi|)
 =\norm{v}\int_{\R}|\varphi(t)|\,dt.
\end{equation}
To examine its translated orbit, use the two rules just proved:
\[
 \begin{aligned}
 V_hS_\varphi v
 &=I_H(t\mapsto\varphi(t)V_{t+h}v)\\
 &=I_H(t\mapsto\varphi(t-h)V_tv).
 \end{aligned}
\]
The difference from $S_\varphi v$ therefore satisfies
\begin{equation}\label{eq:height-continuity-bound}
 \begin{aligned}
 \norm{V_hS_\varphi v-S_\varphi v}
 &\leq\norm{v}\,I_{\C}
       \bigl(t\mapsto|\varphi(t-h)-\varphi(t)|\bigr)\\
 &=\norm{v}\int_{\R}|\varphi(t-h)-\varphi(t)|\,dt
 \longrightarrow0.
 \end{aligned}
\end{equation}
For small $h$, the integrands vanish outside one fixed bounded
interval and converge uniformly to zero, by uniform continuity of
$\varphi$. Their ordinary integrals consequently tend to zero.
This proves $S_\varphi v\in H_v$ for every $v\in H$, without requiring
continuity of the original orbit $t\mapsto V_tv$.

Now let $v\perp H_v$. For $w\in H_v$ and $t\in F$,
\[
 \ip{w}{V_tv}=\ip{V_{-t}w}{v}=0.
\]
Using these scalar tests in \eqref{eq:height-vector-definition} shows
$S_\varphi v\perp H_v$. But \eqref{eq:height-continuity-bound} already
put this vector in $H_v$. Thus
\begin{equation}\label{eq:height-kills-complement}
 S_\varphi v=0\qquad(v\in H_v^\perp).
\end{equation}

Finally, let $v\in H_v$ and take the kernels in
Lemma~\ref{lem:smoothing}. Since
$I_{\C}(\varphi_\varepsilon)=1$, the vector norm bound and positivity
give
\begin{equation}\label{eq:height-approximation-bound}
 \begin{aligned}
 \norm{S_{\varphi_\varepsilon}v-v}
 &=\norm{I_H(t\mapsto
       \varphi_\varepsilon(t)(V_tv-v))}\\
 &\leq I_{\C}\bigl(t\mapsto
       \varphi_\varepsilon(t)\norm{V_tv-v}\bigr)\\
 &\leq\sup_{\substack{t\in F\\|t|\leq\delta_\varepsilon}}
       \norm{V_tv-v}
 \longrightarrow0.
 \end{aligned}
\end{equation}
The last step is exactly the definition of $H_v$, which controls every
sufficiently small algebraic height. For an arbitrary vector, decompose
$v=Qv+(v-Qv)$. Equation~\eqref{eq:height-kills-complement} removes the
second component, and \eqref{eq:height-approximation-bound} sends the
average of the first to $Qv$. This proves \eqref{eq:smoothing}.
Equation~\eqref{eq:height-operator-bound} also gives
$\norm{S_{\varphi_\varepsilon}}\leq1$. Although the vector averages
were defined by scalar tests, the conclusion is convergence in Hilbert
norm for each vector, as the displayed estimate shows.

In the probability setting of the lemma, apply
Lemma~\ref{lem:factor} to the countable dense group $F\subset\R$ and
its measure-preserving height translations. It identifies $H_v$ with
the functions measurable for the corresponding retained collection of
events, and $Q$ with conditional expectation onto that collection.
Consequently $Q$ preserves nonnegative functions and fixes $1$.
This last assertion uses the probability action; it is not an order
property asserted for an arbitrary Hilbert space.

\section*{Statement of AI use}

We started our work on the chromatic number of $\R^n$ in late August.
On the 7'th of October after the OpenAI release we used ChatGPT~6 Astra
to obtain the main ideas from~\cite{OpenAI} and were able to produce the
$(2+o(1))^n$ lower bound along with the result we present here. After the
work of Davies~\cite{Davies} was released we decided to publish this
remaining result that closes the gap between $n=2$ from~\cite{OpenAI} and
$n\ge4$ from~\cite{Davies}. We also used ChatGPT~6 Astra for editing the
paper and proofreading.

The authors are responsible for the correctness of the proofs presented in the paper.

\phantomsection

\bibliographystyle{abbrv}
\bibliography{main}

\end{document}